\documentclass[11pt]{amsart}
\usepackage{amssymb,latexsym} \textwidth 15.00cm \textheight 22cm
\topmargin 0.0cm \oddsidemargin 0.5cm \evensidemargin 0.5cm
\parskip 0.0cm

\newtheorem{theorem}{Theorem}

\renewcommand{\theequation}{\arabic{section}.\arabic{equation}}
\renewcommand{\thetheorem}{\arabic{section}.\arabic{theorem}}

\newtheorem{remark}[theorem]{Remark}
\newtheorem{proposition}[theorem]{Proposition}
\newtheorem{lemma}[theorem]{Lemma}

\newtheorem{corollary}[theorem]{Corollary}

\newtheorem{conjecture}[theorem]{Conjecture}

\newcommand{\kh}{K\"{a}hler}

\newcommand{\p}{\partial}

\newcommand{\kl}{k\bar{l}}
\newcommand{\be}{\begin{equation}}
\newcommand{\ee}{\end{equation}}
\newcommand{\bp}{\begin{proposition}}
\newcommand{\ep}{\end{proposition}}
\newcommand{\bt}{\begin{theorem}}
\newcommand{\et}{\end{theorem}}

\begin{document}
\setlength{\baselineskip}{1.2\baselineskip}

\title[On a class of fully nonlinear flows in K\"{a}hler geometry]{On a class of fully nonlinear flows in K\"{a}hler geometry}
\author{Hao Fang}
 \address{Department of Mathematics\\
         University of Iowa, Iowa City, IA 52245}
\email{haofang@math.uiowa.edu}

\author{ Mijia Lai}
\address{Department of Mathematics\\
         University of Iowa, Iowa City, IA 52245}
         \email{mijlai@math.uiowa.edu}

\author{ Xinan Ma}
\address{Department of Mathematics\\
         University of Science and Technology of China\\
         Hefei, 230026, Anhui Province, CHINA}
         \email{xinan@ustc.edu.cn}

\thanks{The work of the first-named author is partially supported by NSF grant number DMS 060672. The work of the third-named author is partially supported by NSFC grant number 10671186, NSF grant number DMS 0635607 and Z\"urich Financial Services.}

\begin{abstract}
In this paper, we study a class of fully nonlinear metric flow on
K\"{a}hler manifolds, which includes the J-flow as a special case.
We provide a sufficient and necessary condition  for the long time
convergence of the flow, generalizing the result of Song-Weinkove.
As a consequence, under the given condition, we solved the
corresponding Euler equation, which is fully nonlinear of
Monge-Amp\`{e}re type. As an application, we also discuss a
complex Monge-Amp\`{e}re type equation including terms of mixed
degrees, which was first posed by Chen.
\end{abstract} \maketitle

\section{Introduction}
\setcounter{equation}{0}

In the study of K\"ahler geometry, the geometric flow method has
been applied extensively to obtain ''optimal'' metrics. One
classical example is the K\"ahler-Ricci flow. If the manifold has
negative or vanishing first Chern class, the K\"ahler-Ricci flow
converges to the Einstein metric, see Cao~\cite{C}. Another
example is the so-called J-flow. It was introduced by
Donaldson~\cite{D} in the setting of moment maps and by Chen
in~\cite{C1,C2}, as the gradient flow of the
\textsl{J}-functional, which appears as a term of the Mabuchi
energy. In~\cite{W1},  Weinkove settled the question of Donaldson for surfaces.   A sufficient class condition for the convergence of the J-flow is derived in~\cite{W2}. In~\cite{S-W}, Song and Weinkove proved a positivity condition to be equivalent to the convergence of the
J-flow to a critical metric; The precise statement
of this condition can be found in the discussion after (\ref{C_k}). In general, the solution of
these geometric flows usually depends on establishing a priori
estimates of parabolic PDEs.

In this paper, we will study a class of fully non-linear geometric
flows, which was motivated by the construction of J-flow.

Let $(M,\omega)$ be a closed K\"ahler manifold of dimension $n$.
Define \be \mathcal{H}^{+}= \{[\chi]\in H^{1,1}(M),\ \ \ \exists
\chi\in[\chi],\ \chi>0\ \}.\ee Let $[\chi]\in\mathcal{H}^{+}$ and
$\chi_{_0}\in [\chi]$ is another K\"ahler form on $M$.  We define
the corresponding K\"ahler cone and K\"ahler potential space with
respect to $[\chi]$ as
\be
\mathcal{K}_{[\chi]}=\{\chi_{\varphi}=\chi_{_0}+\frac{\sqrt{-1}}{2}\partial\overline{\partial}\varphi
>0,\ \ \ \varphi\in C^{\infty}(M) \},
\ee \be \mathcal{P}_{\chi_{_0}}=\{\varphi\in C^{\infty}(M)\mid
\chi_{\varphi}=\chi_{_0}+\frac{\sqrt{-1}}{2}\partial\overline{\partial}\varphi
>0\}.
\ee

For a fixed integer $k\in[1,n]$,  and
$\lambda=(\lambda_{1},\cdots, \lambda_{n})\in \mathbb{R}^n$, the
$k$-th elementary symmetric polynomial of $\lambda$ is defined as
\begin{eqnarray*}
\sigma_{0}(\lambda)&=&\ \ \ \ \ \ \ \ \ 1;\\
\sigma_k(\lambda)&=&\sum_{1\leq i_1<i_2<\cdots<i_k\leq n}\lambda_{i_1}\lambda_{i_2}\cdots\lambda_{i_k},\ \ \ k\geq 1.
\end{eqnarray*}
When no confusion arises, we also use $\sigma_k(A)$ to denote the
$k$-th elementary symmetric function of eigenvalues of a Hermitian
matrix $A$.

In a local normal coordinate system of $M$ with respect to
$\omega$, we have
$$\chi_{_0}=\frac{\sqrt{-1}}{2}\chi_{_{0i\bar{j}}}dz^{i}\wedge dz^{\bar{j}},\
\ \ \ \
\chi_{\varphi}=\frac{\sqrt{-1}}{2}(\chi_{_0i\bar{j}}+\varphi_{i\bar{j}})dz^{i}\wedge
dz^{\bar{j}}.$$ Following the notation above, we denote
$$\sigma_k(\chi_\varphi)={{n}\choose{k}}\frac{\chi_\varphi^k\wedge\omega^{n-k}}{\omega^n},$$
which is just the k-th elementary symmetric polynomial of the
eigenvalues of the matrix $(\chi_{_0i\bar{j}}+\varphi_{i\bar{j}})$
with respect to the background metric $\omega$.

We set the volume form on $M$ as $dv=\omega^{n}/n!$. It is clear that
\begin{eqnarray*}
c_{k}&=&c_{k,[\omega],[\chi]}=\frac{\int_{M}
\chi_{_0}^{n-k}\wedge\omega^k}{\int_{M} \chi_{_0}^n},\\
c'_{k}&=&c'_{k,[\omega],[\chi]}=\frac{{n\choose k}\int_{M}
\chi_{_0}^{n-k}\wedge\omega^k}{\int_{M} \chi_{_0}^n}=\frac{\int_{M}
\sigma_{n-k}(\chi_{_0}) \ dv}{\int_{M} \sigma_n(\chi_{_0})\  dv}
\end{eqnarray*} are topological constants.
Now we consider following flow in $\mathcal{P}_{\chi_{_0}}$:
\begin{eqnarray} \chi_{t}&=&\chi_{_0}+\frac{\sqrt{-1}}{2}\partial\overline{\partial}\varphi_{t},\notag\\\label{flow}
\frac{\partial\varphi_t}{\partial t}&=&{c'_{k}}^{\frac{1}{k}}
-(\frac{\sigma_{n-k}(\chi_{\varphi_t})}{\sigma_n(\chi_{\varphi_t})})^{\frac{1}{k}},\\
\varphi_{_0}&=&0. \notag
\end{eqnarray}  Clearly, the stationary
metric of this flow is a K\"{a}hler metric $\chi\in\mathcal{K}_{[\chi]}$ satisfying:
\begin{equation}
\chi^{n-k}\wedge\omega^k=c_{k}\chi^{n}=(\frac{\int
\chi_{_0}^{n-k}\wedge\omega^k}{\int \chi_{_0}^n}
)\chi^n.\label{euler}
\end{equation}

In the case of $k=1$, our flow is same as the $\textsl{J}$-flow. Song-Weinkove~\cite{S-W} gave a sufficient and necessary condition for the J-flow to exist and converge to a solution of
(\ref{euler}).

One of the purposes of this paper is to give a necessary and
sufficient condition for the flow (\ref{flow}) to converge to the
stationary metric, which we now describe as a cone condition. For
$M$ and $\omega$ given as above, we define $ {\mathcal
C}_{k}={\mathcal C}_{k}(\omega)$ as

\be {\mathcal C}_{k}(\omega)=\{[\chi]\in\mathcal{H}^{+}, \ \exists
\chi'\in[\chi], \ s.t.\
c_{k}n\chi'^{n-1}>(n-k)\chi'^{n-k-1}\wedge\omega^k\}. \label{C_k}
\ee

${\mathcal C}_{k}$ is an affine cone in $\mathcal{H}^{+}$.  For $k=1$, $C_{1}$ is first defined in~\cite{S-W}. It is
easy to check that $[\chi]\in {\mathcal C}_{k}$ is a necessary
condition for  the equation (\ref{euler}) to be solvable (see
Section 2 for more details).  The main theorem of this paper is
the following

\begin{theorem}
Suppose $M$, $\omega$ and $\chi_{_0}\in[\chi]$ are defined as
above. Let $1\leq k \leq n$. If $[\chi]\in
\mathcal{C}_{k}(\omega)$, then flow (\ref{flow}) has a long time
solution, which converges to a smooth metric satisfying
(\ref{euler}).\label{theorem1}
\end{theorem}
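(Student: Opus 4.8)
The plan is to follow the parabolic-PDE strategy that has become standard for the J-flow and its generalizations (Weinkove, Song-Weinkove), adapted to the fully nonlinear operator $F(\chi_\varphi)=(\sigma_{n-k}(\chi_\varphi)/\sigma_n(\chi_\varphi))^{1/k}$. First I would record the short-time existence: the flow (\ref{flow}) is parabolic at $t=0$ because the linearization of $-F$ in the direction $\partial\bar\partial\psi$ is, up to a positive factor, the trace of the Hessian of $\psi$ against a positive-definite endomorphism built from $\chi_{\varphi_t}$ and $\omega$ (concavity of $\sigma_{n-k}/\sigma_n$ on the positive cone plays the role here), so the standard implicit function theorem argument gives a solution on a maximal interval $[0,T)$. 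Next I would establish the preserved-class and $C^0$ bounds. Differentiating the flow in $t$ and applying the maximum principle to $\dot\varphi_t$ shows $\dot\varphi_t$ is uniformly bounded, hence $(\sigma_{n-k}/\sigma_n)(\chi_{\varphi_t})$ stays in a fixed compact subinterval of $(0,\infty)$, which in turn yields a uniform bound on $\int_M \varphi_t\,dv$ after normalization and then, by the Moser iteration / Green's function argument as in the J-flow literature, a uniform $C^0$ bound $\|\varphi_t\|_{C^0}\le C$.

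The heart of the matter is the second-order estimate, and this is where the cone condition $[\chi]\in\mathcal C_k(\omega)$ must enter. Following Song-Weinkove, I would fix $\chi'\in[\chi]$ with $c_k n\,\chi'^{\,n-1}>(n-k)\chi'^{\,n-k-1}\wedge\omega^k$ and write $\chi'=\chi_0+\frac{\sqrt{-1}}{2}\partial\bar\partial\psi_0$; replacing $\varphi_t$ by $\varphi_t-\psi_0$ one reduces to the case where the strict inequality holds at the initial metric. Then I would apply the maximum principle to a quantity of the form $\log\lambda_{\max}(\chi_{\varphi_t})-A\varphi_t$ (or $\log(\mathrm{tr}_\omega\chi_{\varphi_t})-A\varphi_t$ with a perturbation argument to handle non-smoothness of the top eigenvalue), where $A$ is a large constant. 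Differentiating under the flow, the troublesome third-order terms are absorbed using the concavity of the operator, and the good negative term $-A\,\square\varphi_t$ produced by the $-A\varphi_t$ term — which equals $-A(\dot\varphi_t - c_k'^{1/k} + \text{lower order})$ combined with the structural inequality coming from the cone condition — dominates. Concretely, the cone condition is exactly what is needed so that at a point where $\chi_{\varphi_t}$ has a very large eigenvalue, the term $\sum_i F^{i\bar i}$ (trace of the linearized operator) times $\mathrm{tr}_\omega\chi'$ beats $\mathrm{tr}_\omega\omega$, giving a differential inequality $\square(\log\lambda_{\max}-A\varphi_t)\le C$ that forces $\lambda_{\max}\le C$. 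Hence $C^{-1}\omega\le\chi_{\varphi_t}\le C\omega$ uniformly in $t$.

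Once the metrics are uniformly equivalent to $\omega$, the operator becomes uniformly elliptic and concave, so the complex Evans-Krylov theory and Schauder estimates bootstrap to uniform $C^{\infty}$ bounds, giving long-time existence ($T=\infty$). For convergence I would use a monotone functional — the natural generalization of the $I$-functional / Mabuchi-type energy whose gradient flow is (\ref{flow}) — which is bounded below and decreasing, with $\frac{d}{dt}(\text{energy})=-\int_M|\dot\varphi_t|^2\,(\text{positive weight})\,dv$; combined with the uniform higher-order bounds and an interpolation argument this yields $\dot\varphi_t\to 0$, and then $\varphi_t$ (suitably normalized) converges in $C^{\infty}$ to a limit $\varphi_\infty$ solving $F(\chi_{\varphi_\infty})=c_k'^{1/k}$, i.e. (\ref{euler}). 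Alternatively one can use exponential decay of $\mathrm{osc}\,\dot\varphi_t$ via the maximum principle applied to the evolution equation of $\dot\varphi_t$ together with the positive lower bound on the linearized operator. I expect the main obstacle to be precisely the $C^2$ estimate: making the concavity-plus-cone-condition argument work for the operator $\sigma_{n-k}/\sigma_n$ (rather than $\sigma_1$ as in the J-flow) requires care in choosing the test quantity and in controlling the commutator/torsion terms on a general Kähler manifold, and in isolating the exact algebraic form of the cone inequality (\ref{C_k}) that makes the bad terms cancel.
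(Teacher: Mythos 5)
Your overall architecture (maximum principle for the second-order estimate, cone condition entering there, energy functionals for convergence) matches the paper's, but there are two genuine gaps. The first is the logical order of the $C^0$ and $C^2$ estimates. You claim a uniform $C^0$ bound on $\varphi_t$ directly from the uniform bound on $\dot\varphi_t$ "by the Moser iteration / Green's function argument as in the J-flow literature," and only afterwards run the second-order estimate. But the bound on $\dot\varphi_t$ by itself only gives $|\varphi_t|\le Ct$, and for $k<n$ there is no known direct $C^0$ estimate for this inverse-Hessian type operator: in the J-flow literature the Moser iteration is applied to $u=e^{-B\tilde\varphi_t}$ and the differential inequality it needs comes precisely from the \emph{partial} second-order estimate $\|\partial\bar\partial\varphi_t\|_{C^0}\le Ce^{A(\varphi_t-\inf\varphi_t)}$, established first. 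The paper follows this order (Theorem~\ref{t3.1} before Theorem~\ref{t4.1}, with the constant $A$ of the former feeding the exponent $B=A/(1-\delta)$ in the latter, and the normalization supplied by the monotonicity $\mathcal F_{n-k}(\varphi_t)\le 0$ of Proposition~\ref{p4.2}). As written, your $C^0$ step is circular-adjacent: the Moser iteration you invoke presupposes the estimate you derive later.

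The second gap is in the treatment of the third-order terms in the $C^2$ computation. You say they are "absorbed using the concavity of the operator," but concavity of $F=-\sigma_k^{1/k}(\chi^{-1})$ only makes $\sum F^{i\bar j,k\bar l}\chi_{i\bar j,1}\chi_{k\bar l,\bar1}$ nonpositive; it does not control the additional positive term $\sum_i F^{i\bar i}|\chi_{1\bar1,i}|^2/\chi_{1\bar1}^2$ coming from $\log\chi_{1\bar1}$. For $k=1$ this is handled by explicit computation, but for general $k$ one needs the strictly stronger inequality
\begin{equation*}
\Bigl(g_{ij}+\frac{g_i}{\lambda_j}\delta_{ij}\Bigr)\xi_i\bar\xi_j\ \ge\ 0,\qquad g=\sigma_k^{1/k},
\end{equation*}
(Proposition~\ref{Prop2.a} and Remark~\ref{remk2.0}, with a separate argument for the off-diagonal group of terms), which is the main new algebraic input of the paper for $k\ne 1,n$ and cannot be waved through as "concavity." Relatedly, the precise way the cone condition enters is not that $\sum_iF^{i\bar i}\chi'_{i\bar i}$ beats $\operatorname{tr}_\omega\omega$, but that when the eigenvalue ratio $\chi_1/\chi_n$ is large one has $(1-\epsilon)\sum_iF^{i\bar i}\chi'_{i\bar i}\ge c^{-1/k}\sigma_k^{2/k}(\chi^{-1})$ (Theorem~\ref{t2.8}), which absorbs the $F^2$ term produced by the $-A\varphi$ weight; the complementary regime $\chi_1/\chi_n\le N$ is handled separately using the bound on $\sigma_{n-k}/\sigma_n$. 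Your proposal would need both of these ingredients spelled out to close.
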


It is worthwhile to point out the case of  $k=n$.  Notice that the
corresponding equation is equivalent to
\be
\chi_{\varphi}^n=\frac{\int_{M} \chi_{_0}^n}{\int_{M}
\Omega}\Omega,\label{1.14}
\ee where $\Omega$ is any given volume
form. This was solved by Yau in his celebrated paper~\cite{Y}.
Also notice that the condition  (\ref{C_k}) becomes trivial in
this case; in other words, ${\mathcal C}_{n}={\mathcal H}^{+}$.
Cao~\cite{C} provides a parabolic approach to this equation, using
Ricci flow.

Notice that for the $k=1$ case, our condition and conclusion are
exactly same as the ones in~\cite{S-W}.

Theorem~\ref{theorem1} can be viewed as a finite interpolation
between results of Yau~\cite{Y}, Cao~\cite{C},
Song-Weinkove~\cite{S-W}. In fact, our basic approach to prove
Threorem~\ref{theorem1} closely follows these earlier works. In particular, the idea of establishing partial $C_{0}$ estimate before $C_{2}$ and $C_{0}$ estimates first appears in~\cite{W1}. However, new convexity phoneomena shows up for $k\neq 1,n$ cases.

Theorem~\ref{theorem1} can be understood from several aspects.

First, Theorem~\ref{theorem1} can be understood geometrically. One
motivation for the construction of this flow (\ref{flow}), as well
as an important ingredient of the proof of Theorem~\ref{theorem1},
is the following functional defined for $\chi_{\phi}$ with
${\phi}\in\mathcal{P}_{\chi_{_0}}$ and $j\geq 0$,
 \be {\mathcal
F}_{j}(\chi_{\phi})=\int_0^{1}\int_{M}\frac{\p\phi_t}{\p
t}\chi_{\phi_t}^j\wedge\omega^{n-j} dt, \ee where $\phi_{t}\in
\mathcal{P}_{\chi_{_0}}$, $t\in[0,1]$ is a path in connecting
$\chi_{_0}$ and $\chi_{\phi}$. ${\mathcal F}_{j}$ is shown to be
independent of the choice of path \cite{ChT}. Furthermore, a
functional defined as \be\mathcal{\tilde
F}_{j,n}(\chi_{_0},\chi_{\phi})= {\mathcal F}_{j}(\chi_{\phi})-
c_{n-j} {\mathcal F}_{n}(\chi_{\phi})\ee can be viewed as a
functional depending only on $\chi_{_0},\chi_{\phi}\in
\mathcal{K}_{\chi}$.

Notice that for $\chi_{_i}\in[\chi]$, $i=0,1,2$, we have
$$\mathcal{\tilde F}_{j,n}(\chi_{_0},\chi_{_1})+\mathcal{\tilde F}_{j,n}(\chi_{_1},\chi_{_2})=\mathcal{\tilde F}_{j,n}(\chi_{_0},\chi_{_2}).$$
Thus, the minimizer of functional $\mathcal{\tilde
F}_{j,n}(\chi_{0},\cdot)$ is independent of the choice
$\chi_{_0}$. In fact, this functional can be realized as quotients
of Quillen metrics on the determinant bundles with certain virtual
bundle coefficients, see Tian~\cite{T1}.

Our flow (\ref{flow}) is constructed in such a way that the
functional $\mathcal{\tilde F}_{n-k,n}
(\chi_{_0},\chi_{\varphi_{t}})$ is decreasing along the flow. It
is then easy to check that  the corresponding minimum metric
satisfies (\ref{euler}).  Theorem~\ref{theorem1} gives an explicit
path for the functional  $\mathcal{\tilde F}_{n-k,n}
(\chi_{_0},\chi)$ to obtain its unique minimal, when the cone condition
$[\chi]\in \mathcal{C}_{k}$ is satisfied. Notice that our flow is
not the gradient flow of the corresponding functionals except the
case $k=1$. In fact, we modified the functional's gradient flow to
ensure certain PDE estimates hold.

Second, Theorem~\ref{theorem1} provides a necessary and sufficient
condition for (\ref{euler}), an elliptic equation of
Monge-Amp\`{e}re type to be solvable. Notice that  (\ref{euler})
can be written, locally, for $k<n$ as \be
c'_{k}\sigma_{n}(\chi_{\varphi})=\sigma_{n-k}(\chi_{\varphi}), \ee
or, equivalently,
$$\sigma_{k}(\chi_{\varphi}^{-1})=c'_{k}.$$ The corresponding $[\chi]\in\mathcal{C}_{k}$ condition states that there exists a $\chi'\in[\chi]$ such that
\be\sigma_{k}(\chi'^{-1}| i)<c'_{k},\ee  for $1\leq i \leq n$. Refer to Section 2 for
more details.

Equation (\ref{euler}) is also a special case of a question posed
by Chen. In~\cite{C1}, Chen raised the question of solving a very
general fully non-linear equation of Monge-Amp\`{e}re type:
\be\label{cheneqn}
\chi_{\varphi}^n=\sum_{i=0}^{n-1}\alpha_i\chi_{\varphi}^i\wedge
\omega^{n-i},\ee where $\alpha_{i}$'s are real.
Theorem~\ref{theorem1} gives a complete answer for Chen's question
when the right hand side has only one term.

Using similar method, we can also extend our result.

Define, for any fixed  $\alpha\in(0,\infty)$ and  integer $k\in [1,n]$,
\begin{eqnarray*} c_{k,\alpha}&=&c_{k,\alpha,[\omega],[\chi]}= c_{k}+\alpha c_{k-1} ,\\
\mathcal {\tilde F}_{\alpha,k,n}(\chi_{_0},\chi)&=&  \mathcal {\tilde F}_{n-k,n}(\chi_{_0},\chi)+\alpha \mathcal {\tilde F}_{n-k+1,n}(\chi_{_0},\chi),\\
 {\mathcal C}_{k,\alpha}(\omega)&=&\{[\chi]\in\mathcal{H}^{+}, \ \exists \chi'\in[\chi], \ {\rm such \ that}\\&&
 c_{k,\alpha} n\chi'^{n-1}> (n-k)\chi'^{n-k-1}\wedge\omega^k+\alpha
 (n-k+1)\chi'^{n-k}\wedge\omega^{k-1}\}.\label{ckalpha}
\end{eqnarray*}
It is clear to see that when the parameter $\alpha$ runs from $0$
to $\infty$, ${\mathcal C}_{k,\alpha}={\mathcal
C}_{k,\alpha}(\omega)$ gives a continuous deformation from the
cone $\mathcal{C}_{k}\subset \mathcal{H}^{+}$ to
$\mathcal{C}_{k-1}\subset \mathcal{H}^{+}$. We have the following

\bt \label{theorem2} Suppose $M$, $\omega$ and
$\chi_{_0}\in[\chi]$ are defined as above. Assume $1\leq k\leq n$
and $\alpha>0$, then the equation \be c_{k,\alpha}
\chi^{n}=\chi^{n-k}\wedge\omega^k+\alpha
\chi^{n-k+1}\wedge\omega^{k-1} \ee has a unique smooth solution if
and only if $[\chi]\in \mathcal{C}_{k,\alpha}(\omega)$; in this
case, the solution minimizes $\mathcal {\tilde
F}_{\alpha,k,n}(\chi_{_0},\chi)$. \et

Theorem~\ref{theorem2} is proved by  improving the estimates
needed in proving Theorem~\ref{theorem1} to the product manifold
$M\times C$, where $C$ is a smooth algebraic curve.

Based on these known results, we would like to verify that the
similar cone condition would be the necessary and sufficient
condition for the problem of Chen. Using a similar geometric
construction as in the proof of Theorem~\ref{theorem2}, we can
settle many special cases for Chen's problem.  See Section 5 for
more details. We believe this is one of the few examples of the
Monge-Amp\`{e}re type  equations including terms of mixed degrees.
The  geometric structure plays an important role in the solution
of these equations.

Finally, we make some remarks.
\begin{remark}
It is  interesting to point out that the elliptic PDEs studied in
this paper are all solved by geometric flow method. With the
exception of Yau's original equation, continuity method does not
seem to work for the other cases.  \end{remark}

\begin{remark}
It is interesting to study the various cones we defined in
$\mathcal{H}^{+}$. Except the obvious fact that ${\mathcal
C}_{n}(\omega)={\mathcal H}^{+}$ includes all the other cones, the
relative position of ${\mathcal C}_{j}(\omega)$ and ${\mathcal
C}_{k}(\omega)$ for $j\neq k$, $j,k\neq n$ is unknown.
\end{remark}

\begin{remark}
The strong concavity property of the symmetric polynomials is very
important for our estimates. We point out that we do not use the
optimal concavity property available. This leaves room of future
construction of other geometric flows in K\"ahler geometry.
\end{remark}

The rest of this paper is organized as follows. In Section 2 we
introduce further notation and some preliminary facts about the
elementary symmetric polynomials. In Section 3, we derive the
partial $C^2$ estimate by maximum principle, following
Yau~\cite{Y} and Weinkove~\cite{W1}. In Section 4, we derive the
$C^{0}$ estimate and $C^{\infty}$ estimate and the convergence
result. In section 5, we discuss various generalization of
Theorem~\ref{theorem1} and some application to complex geometry.
In the Appendix, we give an alternative proof of our strong
concavity property.

{Acknowledgments:  The first-named author would like to thank Jian
Song for useful discussion. All authors would like to thank
Pengfei Guan and Lihe Wang for discussion. They would like to thank Institute
for Advanced Study for support and hospitality. Most of this work
is done when they  attended special year of Geometric non-linear
PDE at IAS. Thanks also go to referee for his or her careful
proof-reading and useful suggestion.}
\section{Preliminary}
\setcounter{equation}{0} \setcounter{theorem}{0} In this section,
we set up the notation and prove some preliminary results
regarding elementary symmetric functions.

For simplicity, after proper scaling, we may assume
$c_k=\frac{\int \chi_{_0}^{n-k}\wedge\omega^k}{\int
\chi_{_0}^n}=1$ without loss of generality. We also denote
$c=c'_{k}={n \choose k}$ when no confusion occurs.

Fix a local coordinate chart $U\subset M$. For $z=(z_1,z_2, \cdots
, z_n)\in U$, we write
\begin{eqnarray*}
\omega&=&\frac{\sqrt{-1}}{2}g_{i\bar{j}}dz^i\wedge dz^{\bar{j}},\\
\chi_{_0}&=&\frac{\sqrt{-1}}{2}\chi_{_{0i\bar{j}}}dz^i\wedge dz^{\bar{j}},\\
\chi'&=&\frac{\sqrt{-1}}{2}\chi'_{i\bar{j}}dz^i\wedge dz^{\bar{j}},\\
\chi_{\varphi}&=&\frac{\sqrt{-1}}{2} (\chi_{_{0i\bar{j}}}+\varphi_{i\bar{j}})dz^i\wedge dz^{\bar{j}},\\
\chi_{i\bar{j}}&=&\chi_{_{0i\bar{j}}}+\varphi_{i\bar{j}}.
\end{eqnarray*}

When no confusion occurs, we also use $\chi_{_0}$, $\chi'$,
$\chi_{\varphi}$ to denote the corresponding Hermitian matrices at
the given $z$. We always choose the normal coordinate of $\omega$
such that $g_{i\bar{j}}=\delta_{i\bar{j}}$ and $\chi_{\varphi}$ is
diagonal. In other words, we have
$\chi_{\varphi}(z)=\chi=(\chi_{_1},\cdots,\chi_{_n})$.
Furthermore, we may assume $\chi_{_i}\geq\chi_{_j}$ for $i>j$.
That means $\chi_{_1}$ and $\chi_{_n}$ are the maximal and the
minimal eigenvalues of $\chi_{\varphi}$, respectively.

For a Hermitian matrix $A=(a_{i \bar j})_{n\times n}$, define
$$F(A):=-[\frac{\sigma_{n-k}(A)}{\sigma_n(A)}]^{^{\frac{1}{k}}}=-\sigma_k^{\frac{1}{k}}(A^{-1}).$$
It is a well known fact that F is a concave function of $A$ and
${F^{i\bar{j}}}$ is positive definite when restricted to the space
of positive definite hermitian matrixes (see e.g., \cite{S}).
Without further specification, we assume that $A$ is positive in
the rest of this section.

We compute the derivatives of $F$ with respect to entries of $A$
for the future use. \bp For $F$  given as above, we have
$$ F^{i\bar{j}}(A):=\frac{\p
F}{\p
a_{i\bar{j}}}=-\frac{1}{k}(\frac{\sigma_{n-k}}{\sigma_n})^{1/k-1}(\frac{\p
\sigma_{n-k}/\p
a_{i\bar{j}}}{\sigma_{n}}-\frac{\sigma_{n-k}\p\sigma_{n}/\p
a_{i\bar{j}}}{\sigma_{n}^2}).$$
$$F^{i\bar{j},\kl}(A):=\frac{\p^2 F(A)}{\p a_{i\bar{j}}\p a_{\kl}}.$$

If $A=\chi={\rm diag} (\chi_1,\chi_2, \cdots, \chi_n)$ is
diagonal, then $F^{i\bar{j}}$ can be non trivial $\ \text{iff} \ \
i=j$. We have
$$F^{i\bar{i}}=-\frac{1}{k}(\frac{\sigma_{n-k}(\chi)}{\sigma_n(\chi)})^{1/k-1}(\frac{\sigma_{n-k-1}(\chi|i)}{\sigma_{n}(\chi)}-\frac{\sigma_{n-k}(\chi)\sigma_{n-1}(\chi|i)}{\sigma_{n}^{2}(\chi)}),$$
or
$$F^{i\bar{i}}=\frac{1}{k}\sigma_k^{\frac{1}{k}-1}({\chi}^{-1})\sigma_{k-1}({\chi}^{-1}|i)\frac{1}{\chi_i^2}.$$
Furthermore,  $F^{i\bar{j},\kl}$ can be nontrivial  $\ \text{iff}\
i=j, k=l \ \text{or}\  i=l,j=k$. In this case, we have
$$F^{i\bar{j},j\bar{i}}(\chi)=\frac{1}{k}(\frac{\sigma_{n-k}(\chi)}{\sigma_n(\chi)})^{\frac{1}{k}-1}(\frac{\sigma_n(\chi)\sigma_{n-k-2}(\chi|i,j)-\sigma_{n-k}(\chi)\sigma_{n-2}(\chi|i,j)}{\sigma_{n}^2(\chi)}), \text{for}\  i\neq j,$$
where $\chi^{-1}$ denotes the inverse matrix of $\chi$,
$\sigma_k(\chi|i)=\sigma_k(\chi)|_{\chi_i=0}$,
$\sigma_k(\chi|i,j)=\sigma_k(\chi)|_{\chi_i=0,\chi_j=0}$.
\label{p21}\label{derivative}\ep

Also notice that $F$ is homogenous of degree $-1$, so $-F(A)=\sum
_{i,j}F^{i\bar{j}}(A)a_{i\bar{j}}$.\\

We proceed to discuss some technical results. First of all, we
have the following concavity result. Define
$$\Gamma_n=\{(x_1,\cdots, x_n)\in \mathbb{R}^n|x_1>0,x_2>0,\cdots
x_n>0\}.$$
\begin{proposition}\label{Prop2.a}[GLZ]
Let $g(\lambda)= \log\sigma_k(\lambda)$. For $\lambda \in
\Gamma_n$,  $\xi=(\xi_{1},\cdots,\xi_{n}) \in \mathbb{C}^{n}$, let $g_i:=\frac{\p g}{\p
\lambda_i}$, $g_{ij}:=\frac{\p^{2} g}{\p \lambda_i \p \lambda_j}$, we
have
\begin{eqnarray}
\sum_{i=1}^n (g_{ii}+ \frac{g_i}{\lambda_{i}})\xi_i\bar\xi_i +
\sum_{i\neq j}g_{ij}\xi_i\bar\xi_j
 \ge 0.
\label{2.a}
\end{eqnarray}
\end{proposition}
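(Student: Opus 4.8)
The plan is to recognize the weighted Hessian form in (\ref{2.a}) as the ordinary, unweighted complex Hessian of a single explicit function in logarithmic coordinates, and then to conclude from the elementary fact that a sum of log-convex functions is log-convex. Since $\lambda\in\Gamma_n$, I would write $\lambda_i=e^{\mu_i}$ and set $h(\mu):=\log\sigma_k(e^{\mu_1},\dots,e^{\mu_n})$ for $\mu\in\mathbb{R}^n$. Using $\p/\p\mu_i=\lambda_i\,\p/\p\lambda_i$ together with the fact that $\sigma_k$ has degree one in each $\lambda_i$ separately (so that $g_{ii}=-\sigma_k^{-2}(\p\sigma_k/\p\lambda_i)^2$), a short computation gives $\p^2 h/\p\mu_i^2=\lambda_i g_i+\lambda_i^2 g_{ii}$ and $\p^2 h/\p\mu_i\p\mu_j=\lambda_i\lambda_j g_{ij}$ for $i\ne j$. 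Hence, for $\eta\in\mathbb{C}^n$ and $\xi_i:=\lambda_i\eta_i$,
\be
\sum_{i,j}\frac{\p^2 h}{\p\mu_i\p\mu_j}\,\eta_i\bar\eta_j=\sum_{i}\Big(g_{ii}+\frac{g_i}{\lambda_i}\Big)\xi_i\bar\xi_i+\sum_{i\neq j}g_{ij}\xi_i\bar\xi_j .
\ee
Since $\eta\mapsto\xi$ is a bijection of $\mathbb{C}^n$ and the Hessian of $h$ is a real symmetric matrix (for which nonnegativity of the Hermitian pairing $\sum M_{ij}\eta_i\bar\eta_j$ is equivalent to nonnegativity on $\mathbb{R}^n$), proving (\ref{2.a}) becomes exactly the statement that $h$ is a convex function on $\mathbb{R}^n$.

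To finish, I would note that
\[
\sigma_k(e^{\mu_1},\dots,e^{\mu_n})=\sum_{1\le i_1<\cdots<i_k\le n}\exp\!\big(\mu_{i_1}+\cdots+\mu_{i_k}\big)
\]
is a sum of exponentials of linear forms in $\mu$; each summand is log-affine, hence log-convex, and a sum of log-convex functions is log-convex (an immediate consequence of H\"older's inequality). Therefore $h=\log\sigma_k(e^{\mu})$ is convex, which is precisely (\ref{2.a}).

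I do not anticipate a genuine obstacle: the content of the argument is simply the observation that the weighted Hessian in (\ref{2.a}) is the naked Hessian of $\log\sigma_k$ after the substitution $\mu_i=\log\lambda_i$. The only steps that need a little care are the bookkeeping of first and second derivatives under $\lambda_i=e^{\mu_i}$ (where one uses that $\sigma_k$ is affine in each variable), and the standard remark that a real symmetric form is nonnegative against all complex vectors exactly when it is positive semidefinite. If one instead wanted a direct algebraic proof, (\ref{2.a}) unwinds, via $\p\sigma_k/\p\lambda_i=\sigma_{k-1}(\lambda|i)$ and $\p^2\sigma_k/\p\lambda_i\p\lambda_j=\sigma_{k-2}(\lambda|i,j)$, into a polynomial inequality that one could establish by induction on $n$ using $\sigma_k(\lambda)=\sigma_k(\lambda|n)+\lambda_n\sigma_{k-1}(\lambda|n)$; this is presumably related to the Appendix argument, but the logarithmic-coordinate route above is shorter.
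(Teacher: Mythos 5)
Your proof is correct. The chain-rule bookkeeping checks out: with $\lambda_i=e^{\mu_i}$ one has $\p^2 h/\p\mu_i\p\mu_j=\lambda_i\lambda_j g_{ij}+\lambda_i g_i\delta_{ij}$, so the weighted form in (\ref{2.a}) evaluated at $\xi_i=\lambda_i\eta_i$ is exactly the Hessian of $h$ against $\eta$; the passage from complex to real test vectors is valid because the Hessian is real symmetric; and $h(\mu)=\log\sum_{|I|=k}\exp(\sum_{i\in I}\mu_i)$ is convex as a log-sum-exp of affine functions (equivalently, a sum of log-convex functions is log-convex). The only cosmetic point is that the parenthetical remark about $g_{ii}=-\sigma_k^{-2}(\p\sigma_k/\p\lambda_i)^2$ is not actually needed for the Hessian identity, which is pure chain rule.

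Your route is genuinely different from the paper's. The paper computes $g_i$ and $g_{ij}$ explicitly and then invokes the reduction of Lemma 2.3 and the algebraic inequality of Lemma 2.4 in Guan--Ma \cite{GM}, i.e.\ it outsources the real content to a nontrivial Newton-type polynomial inequality among the $\sigma_j(\lambda|i)$ and $\sigma_j(\lambda|ij)$; the appendix gives yet a third argument for the related Remark~\ref{remk2.0}, decomposing a Hessian-like matrix as $\sum_{|I|=k}\chi_I E_I$ and using Andrews' null-vector trick. Your observation that (\ref{2.a}) is nothing but convexity of $\log\sigma_k$ in the logarithmic coordinates $\mu_i=\log\lambda_i$ makes the proposition self-contained and essentially one line, at the cost of being specific to the $\log\sigma_k$ normalization: the paper's and the appendix's machinery is set up to also handle the homogeneous-degree-one normalization $\sigma_k^{1/k}$ (Remark~\ref{remk2.0}), which is what is actually used in the $C^2$ estimate, whereas in your framework that step still requires the separate (easy) computation (\ref{app3}) relating the two normalizations, or the observation in Remark~\ref{remk2.0} that the $\sigma_k^{1/k}$ statement follows from the $\log\sigma_k$ one by a direct calculation.
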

\begin{proof}
We have
$$g_i = \frac{\sigma_{k-1}(\lambda|i)}{\sigma_k(\lambda)}, \quad g_{ij} =
 \frac{\sigma_{k-2}(\lambda|i,j)}{\sigma_k(\lambda)}- \frac{\sigma_{k-1}(\lambda|i)\sigma_{k-1}(\lambda|j)}{\sigma_k^2(\lambda)}.$$
Using the same reduction in Lemma 2.3 of ~\cite{GM}, (\ref{2.a})
can be reduced to the following inequality
\begin{eqnarray}
\sum_{i=1}^n\sigma_k(\lambda|i)\sigma_{n-1}(\lambda|i)\sigma_{k-1}(\lambda|i)|\xi_i|^2 \nonumber\\
\ge \sigma_{n}(\lambda)\sum_{i \neq
j}\{\sigma_{k-1}^2(\lambda|ij)-\sigma _{k}(\lambda|i
j)\sigma_{k-2}(\lambda|ij)\}\xi_i\bar\xi_j , \label{2.c}
\end{eqnarray}
which is just Lemma 2.4 in \cite{GM}.
\end{proof}
\begin{remark}\label{remk2.0}
By the above proposition, if we let $g(\lambda)=
\sigma_k^{\frac{1}{k}}(\lambda)$, for $\lambda \in \Gamma_n$, then
a simple calculation shows, for $\xi=(\xi_{1},\cdots,\xi_{n}) \in \mathbb{C}^{n}$,
\begin{eqnarray}
(g_{ij}+ \frac{g_{i}}{\lambda_j}{\delta}_{ij})\xi_{i}\bar{\xi_{j}} \ge 0.\nonumber
\end{eqnarray}
Another proof will be given in the appendix.
\end{remark}

Second, we have the following local version of the cone condition
(\ref{C_k}).

\bp For $k<n$,$\chi'\in\mathcal{C}_{k}$ is equivalent to
$$\frac{\sigma_{n-k-1}(\chi'|j)}{\sigma_{n-1}(\chi'|j)}=\sigma_k(\chi'^{-1}|j)<{n\choose
k}, $$  for any $j\in\{1,\cdots,n\}$, where $(\chi'|j)$ denotes the matrix obtained
by deleting the j-th column and j-th row of
$\chi'$.\label{new2.2}\ep

\begin{proof}
Assume $\chi'\in\mathcal{C}_{k}$. By (\ref{C_k}), for any given
integer $j\in[1,n]$, the coefficient of the $(n-1,n-1)$ form
${\Pi_{i=1}^{n}}_{i\neq j} dz^{i}d\bar{z}^{i}$ in
$\chi'^{n-1}-\frac{n-k}{n}\omega^k\wedge\chi'^{n-k-1}$ should be
positive; that is,
$$(n-1)!\sigma_{n-1}(\chi'|j)-\frac{n-k}{n}k!(n-k-1)!\sigma_{n-k-1}(\chi'|j)>0.$$ Dividing both sides by
$\frac{n-k}{n}k!(n-k-1)!\sigma_{n-1}(\chi'|j)$, one obtains
$$\frac{\sigma_{n-k-1}(\chi'|j)}{\sigma_{n-1}(\chi'|j)}<{n\choose
k}.$$
\end{proof}

Next, we introduce some simple algebraic facts. Let
$A=(a_{i\bar{j}})$ be a positive Hermitian matrix.

\begin{lemma} Let $I=(i_1,i_2,\cdots, i_k)\subset(1,\cdots, n)$ be an index set, denote its
complement in $(1,2,\cdots, n)$ by $\bar{I}$. We always order
$\bar{I}$ so that $(I,\bar{I})$ is an even permutation of
$(1,2,\cdots,n)$. For $A$, a positive hermitian $n\times n$
matrix, let $A_I$ be the principal minor
 $(a_{i\bar{j}})_{i,j\in I}$. Then $$ \det(A)\leq
\det(A_I)\det(A_{\bar{I}}).$$ \label{lemma2.6}\end{lemma}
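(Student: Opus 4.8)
The plan is to prove the Hadamard-type inequality $\det(A) \le \det(A_I)\det(A_{\bar I})$ for a positive definite Hermitian matrix $A$, where $A_I$ is the principal submatrix on the index set $I$. The natural tool is the Schur complement. Write $A$ in block form relative to the decomposition $\mathbb{C}^n = \mathbb{C}^I \oplus \mathbb{C}^{\bar I}$, so that
\[
A = \begin{pmatrix} A_I & B \\ B^{*} & A_{\bar I} \end{pmatrix},
\]
where $B$ is the off-diagonal block and $B^{*}$ its conjugate transpose. Since $A$ is positive definite, the principal block $A_I$ is also positive definite, hence invertible, and the Schur complement $S := A_{\bar I} - B^{*} A_I^{-1} B$ is itself positive definite (it is the matrix of the quadratic form $A$ restricted to the subspace $\{(-A_I^{-1}Bv, v)\}$, or equivalently it arises from block Gaussian elimination). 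The classical block-determinant identity then gives $\det(A) = \det(A_I)\det(S)$.

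The key step is the comparison $\det(S) \le \det(A_{\bar I})$. I would deduce this from the fact that $S = A_{\bar I} - B^{*}A_I^{-1}B$ and that $B^{*}A_I^{-1}B$ is positive semidefinite (being of the form $C^{*}C$ with $C = A_I^{-1/2}B$, using that $A_I^{-1}$ is positive definite). Thus $0 < S \le A_{\bar I}$ in the Loewner order. The determinant is monotone with respect to the Loewner order on positive definite matrices: if $0 < S \le T$ then $\det(S) \le \det(T)$. This last fact follows, for instance, by writing $\det(S) = \det(T^{1/2}) \det(T^{-1/2} S T^{-1/2}) \det(T^{1/2})$ and noting that $T^{-1/2} S T^{-1/2} \le \mathrm{Id}$ is positive definite, so all its eigenvalues lie in $(0,1]$, hence its determinant is $\le 1$. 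Combining, $\det(A) = \det(A_I)\det(S) \le \det(A_I)\det(A_{\bar I})$, as claimed. The ordering convention on $\bar I$ making $(I,\bar I)$ an even permutation is only there to fix signs when this determinant identity is later used inside expansions of $\sigma_n$; the inequality itself is permutation-insensitive since all quantities are positive.

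The main obstacle, such as it is, is purely bookkeeping: one must be careful that the block decomposition genuinely has $A_I$ and $A_{\bar I}$ as its diagonal blocks, which requires permuting rows and columns of $A$ simultaneously (a congruence by a permutation matrix), an operation that preserves both positive definiteness and the determinant. Once that is set up, everything reduces to the two standard facts — the Schur complement determinant identity and Loewner monotonicity of the determinant — neither of which presents real difficulty. An alternative, essentially equivalent, route is to argue probabilistically or via the entropy/concavity of $\log\det$, but the Schur complement argument is the cleanest and most self-contained.
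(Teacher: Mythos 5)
Your proof is correct and follows essentially the same route as the paper: block decomposition, the Schur complement determinant identity $\det(A)=\det(A_I)\det(A_{\bar I}-B^{*}A_I^{-1}B)$, and the comparison with $\det(A_{\bar I})$. You additionally spell out the Loewner-monotonicity step ($0<S\le A_{\bar I}$ implies $\det S\le\det A_{\bar I}$) that the paper leaves implicit, which is a welcome clarification.
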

\begin{proof} Rearrange $A$ if necessary we may write $A$ as
\begin{equation}
A=
\begin{bmatrix}
  A_I & M \\
  M' & A_{\bar{I}}
\end{bmatrix}.
\end{equation}
By
\begin{equation}
\begin{bmatrix}
  Id & 0 \\
  -M'A_I^{-1} & Id
\end{bmatrix}
\begin{bmatrix}
  A_I & M \\
  M' & A_{\bar{I}}
\end{bmatrix}=
\begin{bmatrix}
 A_I & M \\
  0 & A_{\bar{I}}-M'A_I^{-1}M \\
\end{bmatrix},
\end{equation}
one obtains
$$ \det(A)=\det(A_I)\det(A_{\bar{I}}-M'A_I^{-1}M)\leq
\det(A_I)\det(A_{\bar{I}}),$$ where $M'$ means the conjugate
transpose matrix of $M$. The last inequality follows from the fact
that $M'A_I^{-1}M$ is positive definite.
\end{proof}
The following corollary is a direct consequence of
Lemma~\ref{lemma2.6}.
\begin{corollary}
Let $A$ be as above. Then $\det(A)\leq
\prod_{i=1}^{n}a_{i\bar{i}}$.\label{cor2.6}
\end{corollary}\qed

We are then ready to prove

\begin{lemma} Let
$A=(a_{i\bar{j}})$ be a positive Hermitian matrix. Denote ${\tilde
A}=(a_{i\bar{j}}\delta_{i\bar{j}})$ to be the matrix containing
only the diagonal terms of $A$.
 We have,
\begin{equation}
\sigma_k(\tilde A^{-1})\leq \sigma_k(A^{-1}).
\end{equation}\label{lemma2.5}
\end{lemma}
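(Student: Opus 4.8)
The plan is to prove the inequality term by term over the index sets of size $k$, reducing everything to the two elementary facts just established, Lemma~\ref{lemma2.6} and Corollary~\ref{cor2.6}.

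First I would rewrite both sides in terms of principal minors. Since the diagonal entries of a positive Hermitian matrix are positive, $\tilde A$ is invertible and $\tilde A^{-1}=\mathrm{diag}(a_{1\bar1}^{-1},\dots,a_{n\bar n}^{-1})$, so
$$\sigma_k(\tilde A^{-1})=\sum_{|I|=k}\ \prod_{i\in I}\frac{1}{a_{i\bar i}}.$$
For the right-hand side, the eigenvalues of $A^{-1}$ are the reciprocals of those of $A$, hence $\sigma_k(A^{-1})=\sigma_{n-k}(A)/\sigma_n(A)$; expanding $\sigma_{n-k}(A)$ as the sum of its principal $(n-k)\times(n-k)$ minors and writing each complementary index set as $\bar I$ with $|I|=k$ gives
$$\sigma_k(A^{-1})=\frac{1}{\det A}\sum_{|I|=k}\det(A_{\bar I}).$$
(Equivalently this is Jacobi's identity $\det\big((A^{-1})_I\big)=\det(A_{\bar I})/\det A$, summed over all $I$ with $|I|=k$.)

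Next I would compare the two sums entry by entry. Fix $I$ with $|I|=k$. By Lemma~\ref{lemma2.6}, $\det A\le \det(A_I)\det(A_{\bar I})$, hence $\det(A_{\bar I})/\det A\ge 1/\det(A_I)$; and by Corollary~\ref{cor2.6} applied to the positive Hermitian matrix $A_I$, $\det(A_I)\le\prod_{i\in I}a_{i\bar i}$, so $1/\det(A_I)\ge\prod_{i\in I}a_{i\bar i}^{-1}$. Combining these,
$$\frac{\det(A_{\bar I})}{\det A}\ \ge\ \prod_{i\in I}\frac{1}{a_{i\bar i}}$$
for every such $I$. Summing over all $I$ of size $k$ yields $\sigma_k(A^{-1})\ge\sigma_k(\tilde A^{-1})$, which is the claim.

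I do not expect a serious obstacle here: the only points requiring any care are the bookkeeping in the minor-expansion identity for $\sigma_k(A^{-1})$ and the (trivial) positivity of $\tilde A$, both routine. The substance of the lemma is already contained in Lemma~\ref{lemma2.6} and Corollary~\ref{cor2.6}, and the proof is essentially a weighted sum of those two estimates over the $k$-subsets of $\{1,\dots,n\}$.
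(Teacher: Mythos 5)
Your proof is correct and is essentially identical to the paper's: both rewrite $\sigma_k(A^{-1})=\sum_{|I|=k}\det(A_{\bar I})/\det A$, then bound each term from below by $1/\det(A_I)$ via Lemma~\ref{lemma2.6} and by $\prod_{i\in I}a_{i\bar i}^{-1}$ via Corollary~\ref{cor2.6} applied to $A_I$, and sum over the $k$-subsets. No issues.
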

  \proof By
Corollary~\ref{cor2.6}, we have
$${\frac{1}{\det ({\tilde A})}}\leq \frac{1}{\det (A)}.$$
This means that Lemma~\ref{lemma2.5} holds for $k=n$. For general
$k$, we have
\begin{eqnarray*}
\sigma_k({\tilde A}^{-1})&=& \sum_{|I|=k,(i_1,i_2,\cdots,
i_k)\in I}
\frac{1}{a_{i_1\bar{i_1}}}\frac{1}{a_{i_2\bar{i_2}}}\cdots
\frac{1}{a_{i_k\bar{i_k}}}\\
&\leq& \sum_{|I|=k} \frac{1}{\det(A_I)}\leq \sum_{|I|=k}
\frac{\det (A_{\bar{I}})}{\det
(A)}=\frac{\sigma_{n-k}(A)}{\sigma_n(A)}=\sigma_k(A^{-1}).
\end{eqnarray*}
\qed

Finally, we give the following technical statement, which will be
used in the next section.\bt \label{t2.8}Assume that
$M,\omega,\chi\in[\chi]$ given as before. Assume that $k<n$  and
$[\chi]\in\mathcal{C}_{k}$. Let  $F^{i\bar{i}}(\chi)$ be given as
in Proposition~\ref{derivative}. Let $\chi'\in[\chi]$ be the
K\"{a}hler form satisfying the condition of $\mathcal{C}_{k}$.
Assume $C_1\leq\frac{\sigma_{n-k}(\chi)}{\sigma_n(\chi)}\leq C_2,$
for some universal constants $C_1$ and $C_2$.  Then there exists a
universal constant $N$, depending only on the given geometric
data, such that, if $\frac{\chi_{_1}}{\chi_{_{n}}}\geq N$ then
there exists $\epsilon>0$ such that
\be(1-\epsilon)\sum_{i=1}^{n}F^{i\bar{i}}(\chi)\chi'_{i\bar{i}}\geq
c^{-\frac{1}{k}}\sigma_k^{\frac{2}{k}}(\chi^{-1}).\label{2.3}\ee\label{2.2}
\et

 \proof
Follow the convention, we will verify (\ref{2.3}) under normal
coordinates which diagonalizes $\chi$ at some point. So
$\chi=diag(\chi_1,\chi_2, \cdots \chi_n)$, and $\chi_1\geq \chi_2
\geq \cdots \geq \chi_n$. In local coordinates we will use
$\sigma_k(\chi^{-1})=\frac{\sigma_{n-k}(\chi)}{\sigma_n(\chi)}$
when no confusion arises.

We first notice for the case $\chi_n\ll 1$, (\ref{2.3}) follows
easily. Notice $\chi'$ is a fixed k\"{a}hler form, so there is a constant $\lambda>0$  such that
$$\chi'>\lambda \omega.$$ Therefore,
\begin{eqnarray}\label{2.5}
\sum_{i=1}^{n}F^{i\bar{i}}(\chi)\chi'_{i\bar{i}}&\geq &
\lambda\sum_{i=1}^{n}F^{i\bar{i}}(\chi)\\\notag &=&\lambda
\frac{1}{k}\sigma_k^{\frac{1}{k}-1}(\chi^{-1})\sum_{i=1}^{n}\sigma_{k-1}(\chi^{-1}|i)\frac{1}{\chi_{_i}^2}\\\notag
&\geq& \lambda
\frac{1}{k}\sigma_{k}^{\frac{1}{k}-1}(\chi^{-1})\sigma_{k-1}(\chi^{-1}|n)\frac{1}{\chi_{_n}^2}.\\\notag
\end{eqnarray}
We claim $\sigma_{k-1}(\chi^{-1}|n)\frac{1}{\chi_{_n}}$ is bounded
below. Indeed, $\sigma_{k-1}(\chi^{-1}|n)\frac{1}{\chi_{_n}}$ is
the largest term among
$\sigma_{k-1}(\chi^{-1}|i)\frac{1}{\chi_{_i}}$ by the fact that
$\chi_n$ is the smallest among $\chi_i,1\leq i \leq n$.
Thus,
\be\sigma_{k-1}(\chi^{-1}|n)\frac{1}{\chi_{_n}}\geq 1/n[\sum_{i=1}^{n}\sigma_{k-1}(\chi^{-1}|i)\frac{1}{\chi_{_i}}] =\frac{k}{n}\sigma_k(\chi^{-1})\label{new11}\ee

Now if $\chi_{n}<\delta=\lambda (C_{1}c)^{1\over k}$, (\ref{2.3}) follows easily from (\ref{2.5}) and (\ref{new11}).

So we just need to consider the case $\chi_n\geq \delta$.

Recall G\r{a}rding's inequality: For $\mu, \tau\in\Gamma_n$,
$$ \frac{1}{k}\sum_{j=1}^{n}\tau_j\frac{\p}{\p
\mu_j}\sigma_k(\mu)\geq
\sigma_k^{\frac{1}{k}}(\tau)\sigma_k^{1-\frac{1}{k}}(\mu).$$ Thus,
by Proposition~\ref{p21}, we have, for the matrix $B={\rm
diag}(\frac{\chi'_{1\bar{1}}}{\chi_1^2}, \cdots,
\frac{\chi'_{n\bar{n}}}{\chi_n^2})=\chi^{-1}\tilde{\chi'}\chi^{-1}$,
\begin{eqnarray}\label{2.4}
\sum_{i=1}^{n}F^{i\bar{i}}(\chi)\chi'_{i\bar{i}}&=&\sigma_k^{\frac{1}{k}-1}(\chi^{-1})\frac{1}{k}\sum_{i=1}^{n}\sigma_{k-1}({\chi^{-1}}|i)\frac{\chi'_{i\bar{i}}}{(\chi_i)^2}\\\notag
&\geq&\sigma_k^{\frac{1}{k}-1}({\chi^{-1}})\sigma_k^{1-\frac{1}{k}}({\chi^{-1}})\sigma_k^{\frac{1}{k}}(B)\\\notag
&=&\sigma_k^{\frac{1}{k}}(B).\\\notag
\end{eqnarray}
Comparing with (\ref{2.3}), it suffices to show
\begin{equation}\label{2.10}c^{\frac{1}{k}}\sigma_k^{\frac{1}{k}}(B)\geq
(1+\theta)\sigma_k^{\frac{2}{k}}(\chi^{-1}), \text{for}\ \ \
\theta>0.
\end{equation} By Proposition~\ref{new2.2}, we have
\begin{equation}\sigma_k((\chi'|1)^{-1})\leq{n \choose k}-\eta=c-\eta,
\label{2.7}
\end{equation}
for a universal positive constant $\eta<c$, depending only on
$(M,\omega)$ and $\chi'$,where $(\chi'|1)^{-1}$ is the inverse
matrix of $(\chi'|1)$. We have,
\begin{eqnarray}\label{2.8}
c^{\frac{1}{k}}\sigma_k^{\frac{1}{k}}(B)&\geq &
(\frac{c}{c-\eta})^{\frac{1}{k}}
\sigma_k^{\frac{1}{k}}((\chi'|1)^{-1})
\sigma_k^{\frac{1}{k}}(B)\\\notag &\geq&
(\frac{c}{c-\eta})^{\frac{1}{k}}
\sigma_k^{\frac{1}{k}}((\tilde{\chi'|1})^{-1})
\sigma_k^{\frac{1}{k}}(B) \\\notag &\geq &
(\frac{c}{c-\eta})^{\frac{1}{k}}
\sigma_k^{\frac{1}{k}}((\tilde{\chi'|1})^{-1})
\sigma_k^{\frac{1}{k}}(B|1) \\\notag &\geq &
(\frac{c}{c-\eta})^{\frac{1}{k}}\sigma_{k}^{\frac{2}{k}}(\chi^{-1}|1).
\end{eqnarray}
We explain the second and last inequality in (\ref{2.8}). Apply
Lemma~\ref{lemma2.5} to the matrix $(\chi'|1)$, we have
\begin{equation}\label{add1}
\sigma_k((\chi'|1)^{-1})\geq\sigma_k((\tilde{\chi'|1})^{-1}).
\end{equation}
Recall that $B=\chi^{-1}\tilde{\chi'}\chi^{-1}$, then
Cauchy-Schwarz inequality yields
$$\sigma_k( \chi^{-1}\tilde{\chi'}\chi^{-1}|1)\sigma_k((\tilde{\chi'|1})^{-1})\geq\sigma_k^{2}(\chi^{-1}|1).$$

Now suppose $\chi_{_1}\geq N\chi_{_n}$, and $\chi_{_n}\geq
\delta$. Then \begin{eqnarray}
\frac{\sigma_k(\chi^{-1}|1)}{\sigma_k(\chi^{-1})}&=&
1-\frac{\frac{1}{\chi_{_1}}\sigma_{k-1}(\chi^{-1}|1)}{\sigma_k(\chi^{-1})}\\\notag
&\geq& 1- \frac{\frac{1}{\chi_{_1}} \frac{{n-1\choose
k-1}}{\delta^{k-1}}}{\sigma_k(\chi^{-1})} \geq 1-
\frac{{n-1\choose k-1}}{C_1N \delta^k}.\end{eqnarray} Combine
(2.10),(2.12),(2.14), for $\theta$ sufficiently small, a  positive  number
$N=\frac{{n-1 \choose k-1}}{ C_1
\delta^{k}}\frac{1}{1-(1+\theta)^{\frac{1}{2}}(\frac{c-\eta}{c})}$
will satisfy the condition of this Proposition. \qed

\section{Partial Second order estimate}
\setcounter{equation}{0} \setcounter{theorem}{0} In this section,
we use the maximum principle to obtain an estimate on the second
order derivatives of $\varphi$ in terms of $\varphi$.

First we establish the ellipticity condition. Notice that by the
basic properties of symmetric polynomials, $(F^{i\bar{j}})>0$ if
$\chi>0$. Differentiating (\ref{flow}) with respect to $t$ gives
\begin{equation}
\frac{\p}{\p t}(\frac{\p \varphi}{\p
t})=F^{i\bar{j}}(\chi)\p_i\p_{\bar{j}}(\frac{\p \varphi}{\p
t}).\label{3.1}
\end{equation}
Standard theory for parabolic equation ensures short time
existence of the flow. By the maximum principle, $\frac{\p \varphi}{\p t}$
achieves extremal values at $t=0$, i.e.
 \be\min_{t=0} \frac{\p
\varphi}{\p t} \leq\frac{\p \varphi}{\p t}\leq \max_{t=0} \frac{\p
\varphi}{\p t},\label{new1}\ee
which in terms implies
\begin{equation}\label{3.2} \inf_M {\sigma_{n-k}\over\sigma_{n}}(\chi_{_0})\leq {\sigma_{n-k}\over\sigma_{n}}(\chi_{\varphi})\leq \sup_M
{\sigma_{n-k}\over\sigma_{n}}(\chi_{_0}).\end{equation} Hence,
$\chi_{\varphi}>0$, i.e., it remains K\"{a}hler when the flow exists.

Next we prove the partial $C^2$ estimate: \bt Let $M$, $\omega$,
and $\chi_{_0}\in[\chi]$ as above. $k$ is an integer in $[1,n]$.
Suppose $[\chi]\in \mathcal{C}_k$, i.e. there exists $\chi'\in
[\chi]$ such that:
$$\chi'^{n-1}-\frac{n-k}{n}\omega^k\wedge\chi'^{n-k-1}>0.$$
Let $\varphi$ be a solution of (\ref{flow}) on $[0,T)$. Then there
exist constants $A>0$,$C>0$, depending only on the initial data
and independent of $T$, such that for any time $t\geq0$,
$$\|\partial \bar{\partial} \varphi\|_{C^0}\leq Ce^{A(\varphi-\inf_{M\times[0,t]}\varphi)}.$$ \label{t3.1}\et

\begin{proof} By hypothesis, there exists $\phi\in \mathcal{P}_{\chi_{_0}}$, such that
$
\chi'=\chi_{_{0}}+\frac{\sqrt{-1}}{2}\partial\bar{\partial}\phi,$
then
$\chi_{\varphi}=\chi'+\frac{\sqrt{-1}}{2}\partial\bar{\partial}(\varphi-\phi)$.
Consider the function \begin{equation*}G(x,t,\xi):=\log
(\chi_{i\bar{j}}\xi^i\xi^{\bar{j}}) - A
(\varphi-\phi),\end{equation*} for $x\in M$, and $\xi\in
\mathbf{T}_{x}^{(1,0)}M$, $g_{i\bar{j}}\xi^{i}\xi^{\bar j}=1$. $A$
is a constant to be determined. Fix a time $t$, we can assume $G$
attains maximum at $(x_{_0},t_{_0})\in M\times [0,t]$, along the
direction $\xi_{_0}$. Choose normal coordinates of $\omega$ at
$x_{_0}$, so that $\xi_{_0}=\frac{\p}{\p z_1}$ and
$(\chi_{i\bar{j}})$ is diagonal at $x_{_0}$. By the definition of
$G$, it is easy to see that $\chi_{1\bar{1}}=\chi_{_1}$ is the
largest eigenvalue of $\{\chi_{i\bar{j}}\}$ at $x_{_0}$. Without
loss of generality, we can assume $t_{_0}>0$. Thus, locally, we
consider
 $H:=\log \chi_{1\bar{1}}-A(\varphi-\phi)$ instead, which also attains maximum at
$(x_{_0},t_{_0})$, with $H(x_{_0},t_{_0})=G(x_{_0},t_{_0})$. We
compute the evolution of $H$, namely the quantity $\frac{\p H}{\p
t}- F^{i\bar{j}}H_{i\bar{j}}$. Then at $(x_{_0},t_{_0})$, we have
\begin{equation}
\frac{\p H}{\p
t}=\frac{\chi_{1\bar{1},t}}{\chi_{1\bar{1}}}-A\frac{\p \varphi}{\p
t},\label{3.3}
\end{equation}
\begin{equation}
H_{i\bar{i}}=\frac{\chi_{1\bar{1},i\bar{i}}}{\chi_{1\bar{1}}}-\frac{|\chi_{1\bar{1},i}|^2}{\chi_{1\bar{1}}^2}-A(\varphi_{i\bar{i}}-\phi_{i\bar{i}}).\label{3.4}
\end{equation}
Take two derivatives along $\frac{\p}{\p z_1}$ direction to the
equation (\ref{flow}), one gets
\begin{equation}
\chi_{1\bar{1},t}=(\frac{\p \varphi}{\p t})_{1\bar{1}}=
\sum_{i=1}^{n}F^{i\bar{i}}\chi_{i\bar{i},1\bar{1}} + \sum_{1\leq
i,j,k,l\leq
n}F^{i\bar{j},\kl}\chi_{i\bar{j},1}\chi_{\kl,\bar{1}}.\label{3.7}
\end{equation}
Apply (\ref{flow}),(\ref{3.3}),(\ref{3.4}),(\ref{3.7}) we have, at
$(x_{_0},t_{_0})$
\begin{eqnarray*}
& &\frac{\p H}{\p t} - \sum_{i=1}^{n}F^{i\bar{i}}H_{i\bar{i}}\\
\notag &=&
\frac{1}{\chi_{1\bar{1}}}(\sum_{i=1}^{n}F^{i\bar{i}}\chi_{i\bar{i},1\bar{1}}
+ \sum_{1\leq i,j,k,l\leq
n}F^{i\bar{j},\kl}\chi_{i\bar{j},1}\chi_{\kl,\bar{1}})-A\frac{\p
\varphi}{\p
t}-\sum_{i=1}^{n}F^{i\bar{i}}H_{i\bar{i}}\\
\notag&=& \frac{1}{\chi_{1\bar{1}}}\sum_{i=1}^{n}
F^{i\bar{i}}(\chi_{i\bar{i},1\bar{1}}-\chi_{1\bar{1},i\bar{i}})-A\frac{\p
\varphi}{\p
t}+A\sum_{i=1}^{n}F^{i\bar{i}}(\varphi_{i\bar{i}}-\phi_{i\bar{i}})+B\\
\notag&=& \frac{1}{\chi_{1\bar{1}}}\sum_{i=1}^{n} F^{i\bar{i}}(\chi_{i\bar{i},1\bar{1}}-\chi_{1\bar{1},i\bar{i}})-A(c^{\frac{1}{k}}+F)+A\sum_{i=1}^{n}F^{i\bar{i}}(\chi'_{i\bar{i}}+\varphi_{i\bar{i}}-\phi_{i\bar{i}})-A\sum_{i=1}^{n}F^{i\bar{i}}\chi'_{i\bar{i}}+B\\
 \notag&=& \frac{1}{\chi_{1\bar{1}}}\sum_{i=1}^{n}
 F^{i\bar{i}}(\chi_{i\bar{i},1\bar{1}}-\chi_{1\bar{1},i\bar{i}})-Ac^{\frac{1}{k}}-2AF-A\sum_{i=1}^{n}F^{i\bar{i}}\chi'_{i\bar{i}}+B,
\end{eqnarray*}
where $$B=\frac{1}{\chi_{1\bar{1}}}\sum_{1\leq i,j,k,l\leq
n}F^{i\bar{j},\kl}\chi_{i\bar{j},1}\chi_{\kl,\bar{1}}+\sum_{i=1}^{n}F^{i\bar{i}}\frac{|\chi_{1\bar{1},i}|^2}{\chi_{1\bar{1}}^2}$$
includes all the third order derivatives terms of $\varphi$.

We claim that $B\leq0$, the proof of which we postpone to the end
of this section. By maximum principle, $\frac{\p H}{\p t} -
\sum_{i=1}^{n}F^{i\bar{i}}H_{i\bar{i}}\geq0$ at $(x_{_0},t_{_0})$,
thus
\begin{equation*}
\frac{1}{\chi_{1\bar{1}}}\sum_{i=1}^{n}F^{i\bar{i}}(\chi_{i\bar{i},1\bar{1}}-\chi_{1\bar{1},i\bar{i}})-Ac^{\frac{1}{k}}-2AF-A\sum_{i=1}^{n}F^{i\bar{i}}\chi'_{i\bar{i}}\geq0,
\end{equation*} i.e.
\begin{eqnarray}\label{3.5}
\frac{1}{\chi_{1\bar{1}}}\sum_{i=1}^{n}F^{i\bar{i}}(\chi_{i\bar{i},1\bar{1}}-\chi_{1\bar{1},i\bar{i}})&\geq&
A\sum_{i=1}^{n}F^{i\bar{i}}\chi'_{i\bar{i}}+Ac^{\frac{1}{k}}+2AF\\\notag
&\geq& A\sum_{i=1}^{n}F^{i\bar{i}}\chi'_{i\bar{i}}-A
c^{-\frac{1}{k}} F^2\\\notag
&=&A\sum_{i=1}^{n}F^{i\bar{i}}\chi'_{i\bar{i}}-Ac^{-\frac{1}{k}}\sigma_k^{\frac{2}{k}}(\chi^{-1}).
\end{eqnarray}
Notice that
\begin{equation*}\chi_{1\bar{1},i\bar{i}}=\chi_{i\bar{i},1\bar{1}}+\chi_{1\bar{1}}R_{1\bar{1},i\bar{i}}-\chi_{i\bar{i}}R_{i\bar{i},1\bar{1}},
\end{equation*}
so the left hand side of (\ref{3.5}) can be simplified as follows
\begin{eqnarray}\label{3.6}
\frac{1}{\chi_{1\bar{1}}}\sum_{i=1}^{n}F^{i\bar{i}}(\chi_{i\bar{i},1\bar{1}}-\chi_{1\bar{1},i\bar{i}})
& = &
\frac{1}{\chi_{1\bar{1}}}\sum_{i=1}^{n}F^{i\bar{i}}(\chi_{i\bar{i}}R_{i\bar{i}1\bar{1}}-\chi_{1\bar{1}}R_{1\bar{1}i\bar{i}})
\\\notag
&=&
\frac{1}{\chi_{1\bar{1}}}\sum_{i=1}^{n}F^{i\bar{i}}\chi_{i\bar{i}}R_{i\bar{i}1\bar{1}}-\frac{1}{\chi_{1\bar{1}}}\sum_{i=1}^{n}F^{i\bar{i}}\chi_{1\bar{1}}R_{1\bar{1}i\bar{i}}\\\notag
&\leq & \frac{-C_1F}{\chi_{1\bar{1}}} -\sum_{i=1}^{n} F^{i\bar{i}}
R_{1\bar{1}i\bar{i}}\\\notag &\leq &
\frac{C_0}{\chi_{1\bar{1}}}+C_2 \sum_{i=1}^{n} F^{i\bar{i}},
\end{eqnarray}
where $C_1=\max \{1,\sup_{i,j} \{R_{i\bar{i}j\bar{j}}\}\}$, $-C_2={ \min}\{-1,\inf_{i,j}
\{R_{i\bar{i}j\bar{j}}\} \}$ are upper and lower bound of
holomorphic bisectional curvature of $M$, and $C_0=C_1 \sup_{M}
[-F(\chi_{_0})]$. All constants here are positive.

Let $\chi_{1}\geq \cdots\geq\chi_{n}$ be the eigenvalues of $\chi$
with respect to $\omega$. Our goal is to get a uniform upper bound
for $\chi_{_1}=\chi_{1\bar 1}$.\\

If $k<n$, we have two cases: \\
Case 1. $\frac{\chi_{_1}}{\chi_{_n}}\leq N$. $N$ is the constant
in Theorem \ref{2.2}. From (\ref{3.2}), it follows that there exists a constant $C_3$
such that
$$ C_3\leq \sigma_k(\chi^{-1})\leq\frac{{n \choose
k}}{\chi_{_n}^k},$$
from which we get an upper bound
$$\chi_{_n}\leq({{n\choose k}\over C_{3}})^{1\over k}.$$ Hence $$\chi_{_1}\leq N\chi_{n}\leq C,$$ for some uniform constant $C$.\\
Case 2. $\frac{\chi_{_1}}{\chi_{_n}}\geq N$. Then by Theorem
\ref{2.2}, there exists $\epsilon>0$ such that
\be\label{3.9}\sum_{i=1}^{n}F^{i\bar{i}}\chi'_{i\bar{i}}-c^{-\frac{1}{k}}\sigma_k^{\frac{2}{k}}(\chi^{-1})\geq\epsilon\sum_{i=1}^{n}F^{i\bar{i}}\chi'_{i\bar{i}}.\ee
Since $\chi'$ is fixed and $M$ is compact, there exists $\gamma>0$,
such that \be\label{3.10}\epsilon
\sum_{i=1}^{n}F^{i\bar{i}}\chi'_{i\bar{i}}\geq
\gamma\sum_{i=1}^{n}F^{i\bar{i}}.  \ee Combine
(\ref{3.5}),(\ref{3.6}),(\ref{3.9}) and (\ref{3.10}), we get
\begin{equation}
\frac{C_0}{\chi_{1}}+C_2\sum_{i=1}^{n}F^{i\bar{i}}\geq
A\gamma\sum_{i=1}^{n}F^{i\bar{i}}.
\end{equation}
Since $\gamma >0$, we can choose $A$ so that $A\gamma- C_2=1$.
Hence,
\begin{equation} \label{3.11}\frac{C_0}{\chi_{1}}\geq \sum_{i=1}^{n}
F^{i\bar{i}}.
\end{equation} Apply G\r{a}rding's Inequality, Cauchy inequality and  (\ref{3.2}), we have
\begin{eqnarray}\label{p1}
\sum_{j=1}^{n}F^{j\bar{j}}&=&\sum_{j=1}^{n}\frac{1}{k}\sigma_k^{\frac{1}{k}-1}(\chi^{-1})\sigma_{k-1}(\chi^{-1}|j)\frac{1}{\chi_j^2}\\\notag
&\geq&\sigma_k^{\frac{1}{k}-1}(\chi^{-1})\sigma_k^{1-\frac{1}{k}}(\chi^{-1})\sigma_k^{\frac{1}{k}}(\chi^{-2})\\\notag
&\geq& \frac{\sigma_k^{\frac{2}{k}}(\chi^{-1})}{{n \choose
k}}\geq{C_{3}^{2\over k}\over{n \choose k}}.\\\notag
\end{eqnarray}
Combine (\ref{3.11}) and (\ref{p1}), we have
$$\chi_{1}\leq C,$$ for some constant $C$ depending only on
the initial data.\\

For $k=n$, notice in this case $c=1$. From Proposition~\ref{p21},
\begin{equation}
\sum_{i=1}^n F^{i\bar{i}}=
\frac{1}{n}\sigma_n^{-\frac{1}{n}}(\chi)\sum_{i=1}^n
\frac{1}{\chi_i}.
\end{equation}
By  (\ref{3.2}), there exists two positive
constants $C_4$ and $C_5$, such that
\begin{eqnarray}\label{3.14a}
0<C_4 \le \sigma_n^{-\frac{1}{n}}(\chi) \le C_5 <+\infty.
\end{eqnarray}
Now we can proceed directly from (\ref{3.5}) and (\ref{3.6}),
namely:
\begin{eqnarray}\label{3.14b}
A+ 2AF +A \sum_{i=1}^n F^{i\bar{i}}\chi'_{i\bar{i}}&\le&
\frac{C_0}{\chi_{1}}+C_1\sum_{i=1}^n F^{i\bar{i}}.
\end{eqnarray}
Assume $\chi'_{i\bar{i}} \ge \epsilon_o>0$.  Using (\ref{3.14a})
it follows that
\begin{eqnarray}\label{3.14c}
A- 2A\sigma_n^{-\frac{1}{n}}(\chi)
+\frac{A\epsilon_o}{n}\sigma_n^{-\frac{1}{n}}(\chi)
\sum_{i=1}^n\frac{1}{\chi_i} &\le&
\frac{C_0}{\chi_{1}}+C_6\sum_{i=1}^n\frac{1}{\chi_i}\le
C_7\sum_{i=1}^n\frac{1}{\chi_i}.
\end{eqnarray}
Apply  (\ref{3.14a}) again, we get
\begin{eqnarray}\label{3.14d}
(\frac{A\epsilon_o}{n}C_4-C_7) \sum_{i=1}^n\frac{1}{\chi_i}
&\le& 2AC_5.
\end{eqnarray}
Now we take $A$ such that  $\frac{A\epsilon_o}{n}C_4-C_7=1,\quad
i.e., \quad A=\frac{n(1+C_7)}{\epsilon_oC_4}$. From
(\ref{3.14d}), we have
$$\sum_{i=1}^n\frac{1}{\chi_i} \le C_8.$$ Since $\chi_{i}>0$
\begin{eqnarray}\label{3.14e}
\chi_{_i}\ge {C_8}^{-1},
\end{eqnarray}
Combining (\ref{3.14a}) and (\ref{3.14e}), it follows that there
exists a uniform  constant $C_9$
\be\label{3.14f}
\chi_{1}= {{(\Pi_{i=2}^{n}\chi^{-1}_{i})} \over {\sigma_{n}(\chi^{-1})}}\leq C_8^{n-1}/C_{4}^{n}=C,
\ee
for a uniform constant $C$.

In summary,  for all $1\leq k\leq n$, there exists a uniform constant $C$,
such that $\chi_1\leq C$. Back in the definition of G, we have
\begin{equation}
\log(\chi_{i\bar{j}})-A(\varphi-\phi)\leq\log(\chi_1(x_{_0}))-A(\varphi(x_{_0})-\phi(x_{_0})),
\end{equation}
so $$ \log (\chi_{i\bar{j}})\leq \log
C-A\varphi(x_0)+A\varphi+C'.$$ Exponentiating both sides, we get
the desired estimate.
\end{proof}

Now we prove the claim:
$B=\frac{1}{\chi_{1\bar{1}}}\sum_{i,j,k,l}F^{i\bar{j},\kl}\chi_{i\bar{j},1}\chi_{\kl,\bar{1}}+\sum_{i}F^{i\bar{i}}\frac{|\chi_{1\bar{1},i}|^2}{\chi_{1\bar{1}}^2}\leq
0$.
\begin{proof}
Case 1. $k<n$.\\

Recall from Proposition \ref{p21}, $F^{i\bar{j},\kl}$ is not zero
iff $i=j,k=l$ or $i=l,k=j$. According to the computation there, we
have for $i \neq j$
\begin{eqnarray}\notag
F^{i\bar{j},j\bar{i}}&=&
\frac{1}{k}(\frac{\sigma_{n-k}(\chi)}{\sigma_{n}(\chi)})^{\frac{1}{k}-1}(\frac{\sigma_n\sigma_{n-k-2}(\chi|i,j)-\sigma_{n-k}\sigma_{n-2}(\chi|i,j)}{\sigma_{n}^2})
\\\notag\label{3.12}
&=&-\frac{1}{k}(\frac{\sigma_{n-k}(\chi)}{\sigma_{n}(\chi)})^{\frac{1}{k}-1}(\frac{\chi_i\sigma_{n-k-1}(\chi|i,j)+\chi_j\sigma_{n-k-1}(\chi_j|i,j)+\chi_i\chi_j\sigma_{n-k-2}(\chi|i,j)}{\sigma_{n}^2})\\&<&0.
\end{eqnarray}
So we group terms as follows:\\

The first group:$$X=\frac{1}{\chi_{1\bar{1}}}( \sum_{1\leq i,j\leq
n}F^{i\bar{i},j\bar{j}}\chi_{i\bar{i},1}\chi_{j\bar{j},\bar{1}})+F^{1\bar{1}}\frac{|\chi_{1\bar{1},1}|^2}{\chi^2_{1\bar{1}}}\leq
0.$$

Let
$$f(\chi)=-(\frac{\sigma_{n-k}}{\sigma_n})^{\frac{1}{k}}(\chi).$$
It is sufficient to prove the following point-wise matrix
inequality:
\begin{eqnarray}
(f_{\chi_{i}\chi_{j}}+ \frac{f_{\chi_{i}}}{\chi_{j}}{\delta}_{ij}
)\le 0.\label{3.31} \end{eqnarray}

If we let $\lambda_i= \frac{1}{\chi_{i}}$, and $g(\lambda)=
{\sigma_{k}}^{\frac{1}{k}}(\lambda)$, then (\ref{3.31}) is
equivalent to the following
\begin{eqnarray}
(g_{\lambda_{i}\lambda_{j}}+
\frac{g_{\lambda_{i}}}{\lambda_{i}}{\delta}_{ij} )\ge
0,\label{3.32}
\end{eqnarray}
which is true by Proposition ~\ref{Prop2.a} and Remark
~\ref{remk2.0}.  See also Appendix for an alternative proof.

Second group: $$
Y=\frac{1}{\chi_{1\bar{1}}}\sum_{i=2}^{n}F^{i\bar{1},1\bar{i}}\chi_{i\bar{1},1}\chi_{1\bar{i},\bar{1}}+\sum_{i=2}^nF^{i\bar{i}}\frac{|\chi_{1\bar{1},i}|^2}{\chi_{1\bar{1}}^2}\leq
0.$$ The idea is to use $F^{i\bar{1},1\bar{i}}$ to control
$F^{i\bar{i}}$, take $i=2$ for example. By the K\"{a}hler property
of $\chi$, we have: $$
\chi_{i\bar{j},k}=\chi_{k\bar{j},i},
\chi_{i\bar{j},\bar{k}}=\chi_{i\bar{k},\bar{j}}.$$ It suffices
to show
$$\chi_{1\bar{1}}F^{j\bar{1},1\bar{j}}+F^{j\bar{j}}\leq 0, j\neq1.$$
After taking out the common factor
$\frac{1}{k\sigma_n^2(\chi)}(\frac{\sigma_{n-k}(\chi)}{\sigma_{n}(\chi)})^{\frac{1}{k}}$,
we are left to show
$$\chi_1[\sigma_{n}(\chi)\sigma_{n-k-2}(\chi|1,j)-\sigma_{n-k}(\chi)\sigma_{n-2}(\chi|1,j)]+\sigma_{n-k}(\chi)\sigma_{n-1}(\chi|j)-\sigma_{n-k-1}(\chi|j)\sigma_n(\chi)\leq 0.$$
Here we simply write $\chi_1$ for $\chi_{1\bar{1}}$. Use the
identity
$\sigma_k(\chi)=\sigma_k(\chi|1)+\chi_1\sigma_{k-1}(\chi|1)$, we
have
\begin{eqnarray}\label{y}\notag
&
&\chi_1[\sigma_{n}(\chi)\sigma_{n-k-2}(\chi|1,j)-\sigma_{n-k}(\chi)\sigma_{n-2}(\chi|1,j)]+\sigma_{n-k}(\chi)\sigma_{n-1}(\chi|j)-\sigma_{n-k-1}(\chi|j)\sigma_n(\chi)\\\notag
&=&\sigma_n(\chi)[\chi_1\sigma_{n-k-1}(\chi|j)-\sigma_{n-k-1}(\chi|j)]-\sigma_{n-k}(\chi)[\chi_1\sigma_{n-2}(\chi|1,j)-\sigma_{n-1}(\chi|j)]\\\notag
&=&-\sigma_n(\chi)\sigma_{n-k-1}(\chi|1,2)\leq 0.
\end{eqnarray}

The third group have all the remaining terms: $$
Z=\frac{1}{\chi_{1\bar{1}}}\sum_{1\leq i\leq n, 2\leq j\leq n,
i\neq
j}F^{i\bar{j},j\bar{i}}\chi_{i\bar{j},1}\chi_{j\bar{i},\bar{1}}\leq
0.$$

By (\ref{3.12}), each term in $Z$ is negative.

To sum up, we have $$B=X+Y+Z\leq0.$$

Case 2. $k=n$.\\

If we use the convention $\sigma_{-1}(\chi)=0$, the computation
above is  valid and can be simplied.
\end{proof}

\section{Convergence of the flow}
\setcounter{equation}{0} \setcounter{theorem}{0} In this section,
we study the properties of the functionals $
\tilde{\mathcal{F}}_{k,n}$ raised in the introduction, from which
we  prove the uniqueness of the solution of (\ref{euler}) and
$C^0$ estimate for the oscillation of $\varphi_t$. After getting
$C^0$ estimate of oscillation of $\varphi_t$, all the arguments in
\cite{W2} can be applied verbatim.

For any $\phi\in\mathcal{P}_{\chi_{_0}}$, let \be \delta
\mathcal{F}_k(\phi)=\int_M \delta \phi \chi_{\phi}^{k}\wedge
\omega^{n-k}, \label{4.1}\ee be the infinitesimal variation of the
functional $\mathcal{F}_k$. Then one has explicit formula for
$\mathcal{F}_k$:
$$\mathcal{F}_k(\phi)=\int_0^1\int_M \dot{\phi_t}
\chi_{\phi_t}^{k}\wedge\omega^{n-k} dt,$$ where $\phi_t$ is an
arbitrary path in $\mathcal{P}_{\chi_{_0}}$ connecting $0$ and
$\phi$, and $\dot{\phi_t}$ denotes time derivative. Then let: \be
\tilde{\mathcal{F}}_{k,n}(\phi)=\mathcal{F}_k(\phi)-c_{n-k}\mathcal{F}_n(\phi).\label{4.2}
\ee By the variational characterization of (\ref{4.1}), one has
\be \delta \tilde{\mathcal{F}}_{n-k,n}(\phi)=\int_M \delta \phi
(\chi_{\phi}^{n-k}\wedge \omega^k-c_{k}\chi{\phi^n}).\ee So
 the Euler-Lagrange equation of $\tilde{\mathcal{F}}_{n-k,n}$ is
 \be
 \chi_{\phi}^{n-k}\wedge \omega^k-c_{k}\chi_{\phi}^n=0,\label{4.4'}\ee which is exactly
(\ref{euler}). Regarding the second derivative of
$\tilde{\mathcal{F}}_{k,n}$, one chooses a path $\phi_t$ and use
(\ref{4.1}), (\ref{4.2}) to get: \begin{eqnarray}\notag \frac{d^2
\tilde{\mathcal{F}}_{n-k,n}(\phi_t)}{d t^2}&=& \int_M
\ddot{\phi_t}(\chi_{\phi}^{n-k}\wedge
\omega^k-c_{k}\chi_{\phi}^n)+
\int_M\dot{\phi_t}\partial\bar{\partial}\dot{\phi_t}((n-k)\chi_{\phi}^{n-k-1}\wedge
\omega^k-c_{k}n\chi_{\phi}^{n-1})\\
 &=& \int_M \ddot{\phi_t}(\chi_{\phi}^{n-k}\wedge
\omega^k-c_{k}\chi_{\phi}^n)+\int_M\partial \dot{\phi_t}\wedge
\bar{\partial}\dot{\phi_t}(c_{k}n\chi_{\phi}^{n-1}-(n-k)\chi_{\phi}^{n-k-1}\wedge
\omega^k) \label{4.5}
\end{eqnarray}

We observe the following
\begin{theorem} There is only
one critical point at the level of K\"{a}hler metric if such
critical point exists. \label{p4.1}
\end{theorem}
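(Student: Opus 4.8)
The plan is to prove uniqueness by the maximum principle applied to the difference of two solutions, relying on the strict ellipticity $(F^{i\bar j})>0$ of $F(A)=-\sigma_k^{\frac1k}(A^{-1})$ rather than on convexity of $\tilde{\mathcal F}_{n-k,n}$. Indeed, by (\ref{4.5}) the second variation of $\tilde{\mathcal F}_{n-k,n}$ at a critical point $\phi$ is $\int_M\partial\dot\phi\wedge\bar\partial\dot\phi\wedge\big(c_kn\chi_\phi^{n-1}-(n-k)\chi_\phi^{n-k-1}\wedge\omega^k\big)$, and at such $\phi$ one has, for each $i$, $\sigma_k(\chi_\phi^{-1}|i)=\sigma_k(\chi_\phi^{-1})-\chi_i^{-1}\sigma_{k-1}(\chi_\phi^{-1}|i)<c'_k$, so by Proposition~\ref{new2.2} that $(n-1,n-1)$-form is positive and every critical point is a strict local minimum of $\tilde{\mathcal F}_{n-k,n}$ modulo constants. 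Extracting \emph{global} uniqueness from this — e.g. along the linear path of potentials joining two critical metrics — would require the same positivity all along the path, which is not available; the maximum principle sidesteps the issue.

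Concretely, suppose $\chi_1,\chi_2\in[\chi]$ are K\"ahler forms both solving (\ref{euler}); by the $\partial\bar\partial$-lemma on $M$ write $\chi_i=\chi_{_0}+\frac{\sqrt{-1}}{2}\partial\bar{\partial}\phi_i$ with $\phi_i\in\mathcal{P}_{\chi_{_0}}$. For $k<n$, (\ref{euler}) reads $\sigma_k(\chi_i^{-1})=c'_k$, i.e. $F(\chi_1)=F(\chi_2)=-(c'_k)^{\frac1k}$ (the case $k=n$ is Yau's equation with prescribed volume form~\cite{Y}, and fits the same scheme with $F(A)=-(\det A)^{-1/n}$). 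Set $\psi=\phi_1-\phi_2$ and $\chi_s=(1-s)\chi_2+s\chi_1$, which is positive definite for every $s\in[0,1]$ as a convex combination of positive forms. Integrating $F$ along this segment,
\[
0=F(\chi_1)-F(\chi_2)=\int_0^1\frac{d}{ds}F(\chi_s)\,ds=\Big(\int_0^1 F^{i\bar j}(\chi_s)\,ds\Big)\psi_{i\bar j}=:a^{i\bar j}\psi_{i\bar j},
\]
and $a^{i\bar j}$ is positive definite, being an average of the positive definite Hermitian matrices $F^{i\bar j}(\chi_s)$. Hence $\psi$ satisfies a linear, uniformly elliptic, second-order equation on the compact connected manifold $M$ with no zeroth-order term, so by the strong maximum principle $\psi$ is constant; therefore $\frac{\sqrt{-1}}{2}\partial\bar{\partial}\psi=0$ and $\chi_1=\chi_2$.

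This is not a hard argument; the one essential feature is the absence of a zeroth-order term in $a^{i\bar j}\partial_i\partial_{\bar j}$, which is exactly what forces $\psi$ to be \emph{constant} rather than merely bounded — this is the precise content of uniqueness ``at the level of the K\"ahler metric'', the potential itself being determined only up to an additive constant. In contrast with the existence half of Theorem~\ref{theorem1}, the uniqueness argument proper uses nothing about the cone condition $\mathcal{C}_k$; it rests solely on the ellipticity $(F^{i\bar j})>0$ on positive Hermitian matrices recorded in Section~2.
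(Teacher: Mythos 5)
Your proof is correct, but it takes a genuinely different route from the paper's. The paper runs the affine path $\phi_t=(1-t)\phi_0+t\phi_1$ between the two critical potentials and uses the \emph{convexity} of $A\mapsto\sigma_k(A^{-1})$ to conclude $\sigma_k(\chi_{\phi_t}^{-1})\le c_k'$ for all $t$, hence $\sigma_k(\chi_{\phi_t}^{-1}|i)<c_k'$, hence (via Proposition~\ref{new2.2}) the positivity of the $(n-1,n-1)$-form appearing in the second variation (\ref{4.5}); the functional $\tilde{\mathcal F}_{n-k,n}(\phi_t)$ is then convex on $[0,1]$ with critical points at both endpoints, so it is constant and $\partial\dot\phi_t=0$. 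This means your parenthetical claim that the required positivity ``is not available all along the path'' is actually wrong --- it is exactly what the paper's convexity argument supplies --- but that remark plays no role in your own proof, which is the classical Calabi-type argument: integrate $F$ along the segment $\chi_s$, obtain $a^{i\bar j}\psi_{i\bar j}=0$ with $a^{i\bar j}=\int_0^1F^{i\bar j}(\chi_s)\,ds$ positive definite (and uniformly elliptic by compactness), and invoke the strong maximum principle. Your version is sound and in a sense more economical: it uses only the first-order information $(F^{i\bar j})>0$ on positive Hermitian matrices, needs neither the concavity of $F$ nor the cone condition, and handles $k=n$ uniformly. What the paper's route buys in exchange is the variational picture: it shows $\tilde{\mathcal F}_{n-k,n}$ is convex along affine segments joining critical points, which dovetails with the identification of the critical metric as the global minimizer in the corollary that follows.
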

\begin{proof}
Suppose we have two critical points $\phi_{_0}$ and $\phi_{_1}$.
Consider the affine path $\phi_{t}=(1-t)\phi_{0}+t\phi_{1}$,
$t\in[0,1]$. $\phi_{_0}$ and $\phi_{_1}$ being critical points are
equivalent, in local coordinates, to following inequalities
$$\sigma_k(\chi_{\phi_{_0}}^{-1})=\sigma_k(\chi_{\phi_{_1}}^{-1})=c_k'.$$

Recall that in Section 2, we have proved
$-\sigma_k(\chi^{-1})=F$ is concave, which is equivalent to the convexity of $\sigma_k(\chi^{-1})$. Thus
$$ \sigma_k(\chi_{\phi_t}^{-1})\leq (1-t)c_k'+tc_{k}'=c_{k}', \ \ \ t\in[0,1].$$ Since
$\chi_{\phi_t}^{-1}$ is positive definite, we have
$\sigma_k(\chi_{\phi_t}^{-1}|i)<c_k'$. By
Proposition~\ref{new2.2}, it follows
$$c_{k}n\chi_{\phi_t}^{n-1}-(n-k)\chi_{\phi_t}^{n-k-1}>0$$ as a
$(n-1,n-1)$ form. Therefore by  (\ref{4.5}) and the facts that
$\dot{\phi_{_t}}=\phi_{_1}-\phi_{_0}$, $\ddot{\phi_{_t}}=0$, we conclude that
$\tilde{\mathcal{F}}_{n-k,n}(\phi_t)$ is a convex function:
$[0,1]\rightarrow \mathbb{R}$, with critical points at $t=0,1$.
This implies that $\tilde{\mathcal{F}}_{n-k,n}(\phi_t)$ is a
constant. Furthermore, the indentity
$$\frac{d^2
\tilde{\mathcal{F}}_{n-k,n}(\phi_t)}{d t^2}=0, $$  implies
$\dot{\phi_t}=\phi_{_1}-\phi_{_0}=C$ for some constant $C$, hence
$\chi_{\phi_{_0}}=\chi_{\phi_{_1}}$.

\end{proof}
Next, we establish some propositions regarding monotonicity of the
functionals which will lead to the $C^0$ estimates.

\begin{proposition} \label{p4.3}
The functional $\tilde{\mathcal{F}}_{n-k,n}$ is decreasing along
the flow (\ref{flow}).
\end{proposition}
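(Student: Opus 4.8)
The plan is to compute $\frac{d}{dt}\tilde{\mathcal F}_{n-k,n}(\varphi_t)$ directly along the flow (\ref{flow}) and show it is nonpositive. By the variational formula (\ref{4.1})--(\ref{4.2}), one has
$$\frac{d}{dt}\tilde{\mathcal F}_{n-k,n}(\varphi_t)=\int_M \dot{\varphi_t}\,\bigl(\chi_{\varphi_t}^{n-k}\wedge\omega^k-c_k\chi_{\varphi_t}^n\bigr)=\int_M \dot{\varphi_t}\,\bigl(\sigma_{n-k}(\chi_{\varphi_t})-c\,\sigma_n(\chi_{\varphi_t})\bigr)\,dv,$$
using the normalization $c_k=1$, $c=c'_k={n\choose k}$ from Section~2. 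Now substitute the flow equation $\dot{\varphi_t}=c^{1/k}-\bigl(\frac{\sigma_{n-k}(\chi_{\varphi_t})}{\sigma_n(\chi_{\varphi_t})}\bigr)^{1/k}$, and factor out $\sigma_n(\chi_{\varphi_t})>0$ to write the integrand as
$$\sigma_n(\chi_{\varphi_t})\Bigl(c^{1/k}-\bigl(\tfrac{\sigma_{n-k}}{\sigma_n}\bigr)^{1/k}\Bigr)\Bigl(\tfrac{\sigma_{n-k}}{\sigma_n}-c\Bigr).$$

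The key observation is that the two scalar factors always have opposite signs (or one vanishes): setting $r=\frac{\sigma_{n-k}(\chi_{\varphi_t})}{\sigma_n(\chi_{\varphi_t})}>0$, the product $(c^{1/k}-r^{1/k})(r-c)$ is $\le 0$ for all $r>0$, since $t\mapsto t^{1/k}$ is increasing, so $c^{1/k}-r^{1/k}$ and $c-r$ have the same sign, hence $c^{1/k}-r^{1/k}$ and $r-c$ have opposite signs. Since $\sigma_n(\chi_{\varphi_t})>0$ (the flow stays in $\mathcal P_{\chi_0}$, as established before Theorem~\ref{t3.1} via (\ref{3.2})), the integrand is pointwise $\le 0$, and therefore $\frac{d}{dt}\tilde{\mathcal F}_{n-k,n}(\varphi_t)\le 0$.

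I do not anticipate a serious obstacle here; the monotonicity is essentially built into the construction of the flow. The only points requiring a line of justification are: (i) that (\ref{4.1}) applies along this particular path, which is immediate since $\varphi_t\in\mathcal P_{\chi_0}$ for all $t$ in the interval of existence; and (ii) that we may replace the wedge-product expression by the $\sigma$ expressions, which is just the definition $\sigma_k(\chi_\varphi)={n\choose k}\frac{\chi_\varphi^k\wedge\omega^{n-k}}{\omega^n}$ together with $dv=\omega^n/n!$. One may also remark that equality holds precisely when $\dot{\varphi_t}\equiv 0$, i.e. exactly at the stationary metrics solving (\ref{euler}), which foreshadows the convergence argument.
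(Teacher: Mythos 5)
Your proof is correct and follows essentially the same route as the paper: differentiate $\tilde{\mathcal F}_{n-k,n}$ along the flow via the variational formula, substitute the flow equation, and observe that the integrand has the form $(a^{1/k}-b^{1/k})(b-a)\le 0$ (up to the positive factor $\sigma_n$ and positive constants). The remarks on normalization and the equality case are fine and consistent with the paper's conventions.
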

\begin{proof}
We write (\ref{flow}) as $$
\dot{\varphi_t}=(c_k')^{\frac{1}{k}}+F,
$$ where
$F=-(\frac{\sigma_{n-k}(\varphi_t)}{\sigma_n(\varphi_t)})^{\frac{1}{k}}$.
\begin{eqnarray}\notag
\frac{d}{d t}\tilde{\mathcal{F}}_{n-k,n}(\varphi_t)&=& \int_M
\dot{\varphi_t} (\chi_{\varphi_t}^{n-k}\wedge
\omega^k-c_{k}\chi_{\varphi}^n)
\\&=&
\notag \frac{1}{{n \choose
k}}\int_M((c'_{k})^{1/k}+F)(F^k-c_k')\chi_{\varphi_t}^n\leq 0
\end{eqnarray}

The integrand is of the form $(a^{1/k}-b^{1/k})(b-a)$ which is
clearly non-positive.
\end{proof}

\begin{corollary}
Assume the convergence of the flow, i.e., the existence of the
solution of (\ref{euler}), then the global minimum of
$\tilde{\mathcal{F}}_{n-k,n}$ is realized by the critical metric.
\end{corollary}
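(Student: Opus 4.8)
The plan is to deduce the statement from a comparison between the critical metric and an arbitrary competitor K\"ahler metric, carried out by running the flow (\ref{flow}) \emph{from that competitor}. All the ingredients are already available: solvability of (\ref{euler}) forces $[\chi]\in\mathcal{C}_{k}(\omega)$ (the necessity of the cone condition, which is exactly the computation appearing in the proof of Theorem~\ref{p4.1}); by Theorem~\ref{theorem1} the flow (\ref{flow}) then has a long-time solution converging smoothly to a solution of (\ref{euler}) \emph{for every} K\"ahler form of the class chosen as background; this limit is always the \emph{same} metric, by the uniqueness in Theorem~\ref{p4.1}; the functional $\tilde{\mathcal{F}}_{n-k,n}$ is non-increasing along the flow (Proposition~\ref{p4.3}), whose proof uses only the flow equation and the first variation, hence is insensitive to the choice of background; and $\tilde{\mathcal{F}}_{n-k,n}$ obeys the cocycle relation $\tilde{\mathcal{F}}_{n-k,n}(\chi_1,\chi_2)+\tilde{\mathcal{F}}_{n-k,n}(\chi_2,\chi_3)=\tilde{\mathcal{F}}_{n-k,n}(\chi_1,\chi_3)$ while the $\mathcal{F}_j$ are path-independent (both recalled in the introduction), so that $\tilde{\mathcal{F}}_{n-k,n}$ of (\ref{4.2}) is really a function of the metric alone, written $\tilde{\mathcal{F}}_{n-k,n}(\chi_{_0},\,\cdot\,)$.

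Carrying this out: let $\chi$ be the solution of (\ref{euler}), which exists by hypothesis and is unique by Theorem~\ref{p4.1}, and let $\chi_\psi\in\mathcal{K}_{[\chi]}$ be arbitrary. Run the flow (\ref{flow}) with $\chi_\psi$ in place of $\chi_{_0}$ and zero initial potential, writing $\chi_t$ for the evolving metric; by Theorem~\ref{theorem1} it exists for all $t$ and converges smoothly to a solution of (\ref{euler}), hence to $\chi$. By Proposition~\ref{p4.3}, $t\mapsto\tilde{\mathcal{F}}_{n-k,n}(\chi_\psi,\chi_t)$ is non-increasing, and it vanishes at $t=0$ because $\chi_t=\chi_\psi$ there. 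Since the $\mathcal{F}_j$ are path-independent, $\tilde{\mathcal{F}}_{n-k,n}(\chi_\psi,\,\cdot\,)$ is a continuous function of the endpoint metric, so letting $t\to\infty$ and using $\chi_t\to\chi$ gives $\tilde{\mathcal{F}}_{n-k,n}(\chi_\psi,\chi)\le 0$. The cocycle relation based at $\chi_{_0}$ then yields
\[
\tilde{\mathcal{F}}_{n-k,n}(\chi_{_0},\chi)=\tilde{\mathcal{F}}_{n-k,n}(\chi_{_0},\chi_\psi)+\tilde{\mathcal{F}}_{n-k,n}(\chi_\psi,\chi)\le\tilde{\mathcal{F}}_{n-k,n}(\chi_{_0},\chi_\psi),
\]
and since $\chi_\psi$ was arbitrary, $\chi$ realizes the global minimum of $\tilde{\mathcal{F}}_{n-k,n}$.

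The subtlety — and the reason I route the argument through the flow rather than through a direct convexity estimate — is that one is tempted instead to join $\chi$ to $\chi_\psi$ by the affine path $\chi_{\phi_t}=(1-t)\chi+t\chi_\psi$ and apply the second variation formula (\ref{4.5}): the first variation of $\tilde{\mathcal{F}}_{n-k,n}$ at the critical metric vanishes by (\ref{euler}), so convexity along this path would immediately give $\tilde{\mathcal{F}}_{n-k,n}(\chi_\psi)\ge\tilde{\mathcal{F}}_{n-k,n}(\chi)$. However, (\ref{4.5}) gives convexity only on the locus where $c_{k}n\chi_{\phi}^{n-1}-(n-k)\chi_{\phi}^{n-k-1}\wedge\omega^k>0$, equivalently (Proposition~\ref{new2.2}) where $\sigma_k(\chi_{\phi}^{-1}|i)<c'_k$ for all $i$, and this positivity generally fails for $\chi_\psi$ and hence along the affine segment. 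Using the flow emanating from $\chi_\psi$, whose convergence to the \emph{same} critical metric is supplied by Theorem~\ref{theorem1} together with Theorem~\ref{p4.1}, removes this obstruction; the rest is bookkeeping with the cocycle relation and continuity.
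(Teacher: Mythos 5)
Your proof is correct and is essentially the paper's own argument spelled out in full: the paper disposes of this corollary in one line, ``It follows directly from Proposition~\ref{p4.1} and Proposition~\ref{p4.3},'' and your write-up is precisely the unwinding of that line --- monotonicity of $\tilde{\mathcal{F}}_{n-k,n}$ along the flow started from an arbitrary competitor, convergence of that flow to the \emph{same} limit via Theorem~\ref{theorem1} (available because solvability of (\ref{euler}) forces the cone condition) combined with the uniqueness statement, and the cocycle relation to change basepoints. Your side remark that the affine-path convexity argument of (\ref{4.5}) cannot be applied directly to a general competitor is accurate and explains why the flow-based route is the right one, but it does not alter the fact that your approach coincides with the paper's.
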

\begin{proof}
It follows directly from Proposition \ref{p4.1} and Proposition
\ref{p4.3}.
\end{proof}

Towards $C^0$ estimate, we need another monotonicity: \bp
\label{p4.2}Let $\mathcal{F}_{n-k}$ defined as above, $\varphi_t$
the solution of flow (\ref{flow}), then
$$\frac{d \mathcal{F}_{n-k}(\varphi_t)}{d t}\leq 0,$$ i.e.\ $\mathcal{F}_{n-k}(\varphi_t)$ decreases along the flow.  In particular, $\mathcal{F}_{n-k}(\varphi_{t})\leq 0$ for all $t>0$.\ep
\proof   First we make following observation:
\begin{eqnarray}\label{4.3} \int_M \sigma_{n-k} dv &=& \int_M
(\frac{\sigma_{n-k}}{(\sigma_n)^{\frac{1}{k+1}}})(\sigma_n)^{\frac{1}{k+1}}
dv\\\notag &\leq & [\int_M
(\frac{\sigma_{n-k}}{(\sigma_n)^{\frac{1}{k+1}}})^{\frac{1+k}{k}}
dv]^{\frac{k}{k+1}}(\int_M \sigma_n dv)^{\frac{1}{1+k}}\\\notag
&=&(\int_M
\frac{(\sigma_{n-k})^{\frac{1+k}{k}}}{(\sigma_n)^{\frac{1}{k}}}
dv) ^{\frac{k}{k+1}} (\int_M \sigma_n dv)^{\frac{1}{1+k}}.
\end{eqnarray}
Recall $dv =\frac{\omega^n}{n!}$, so $\sigma_{n-k} dv = \frac{{n
\choose k}}{n!}\chi^{n-k}\wedge\omega^k$. So (\ref{4.3}) gives:
\begin{equation}\label{4.4}
\int_M (\frac{\sigma_{n-k}}{\sigma_n})^{\frac{1}{k}}
\chi^{n-k}\wedge\omega^k \geq c_k'^{\frac{1}{k}}
\int_M\chi^{n-k}\wedge\omega^k.
\end{equation}

Now we compute $\frac{d}{d t}\mathcal{F}_{n-k}(\varphi_t)$ by
choosing the path given by the flow then
\begin{eqnarray}\notag
\frac{d}{d t}\mathcal{F}_{n-k}(\varphi_t) & = &
\int_M\dot{\varphi_t}\chi_{\varphi_t}^{n-k}\wedge\omega^k \\
\notag&=& \int_M
[c_k'^{1/k}+F]\chi_{\varphi_t}^{n-k}\wedge\omega^k \\
\notag&=& \int_M
c_k'^{1/k}\chi_{\varphi_t}^{n-k}\wedge\omega^k-\int_M
(\frac{\sigma_{n-k}}{\sigma_n})^{\frac{1}{k}}\chi_{\varphi_t}^{n-k}\wedge\omega^k\leq
0.\\\notag
\end{eqnarray}\qed

From Proposition ~\ref{p4.2}, we know
$\mathcal{F}_{n-k}(\varphi_t)\leq0$. But the definition of
$\mathcal{F}_{n-k}$ is independent of the choice of the path, we
can choose the path $\gamma(s)=s\varphi_t$ to compute
$\mathcal{F}_{n-k}(\varphi_t)$ as well.
\begin{eqnarray*}
\mathcal{F}_{n-k}(\varphi_t)&=& \int_0^1\int_M \varphi_t
\chi_{s\varphi_t}^{n-k} \wedge\omega^k d s
\\\notag &=& \int_0^1\int_M \varphi_t
(s\chi_{\varphi_t}+(1-t)\chi_{_0})^{n-k}\wedge \omega^k d
s\\\notag &=& \sum_{l=0}^{n-k}\int_0^1{{n-k} \choose l} s^l
(1-s)^{n-k-l} d s \int_M \varphi_t \chi_{\varphi_t}^l\wedge
\chi_{_0}^{n-k-l} \wedge \omega^k\leq0.
\end{eqnarray*}

So at time $t$, we may write in short
$\mathcal{F}_{n-k}(\varphi_t)=\int_M \varphi_t d\mu_t$. Now we are
in the position to prove following:

\bt Suppose that
$\chi'^{n-1}-\frac{n-k}{n}\omega^k\wedge\chi'^{n-k-1}>0$. Let
$\varphi_t$ be a solution of (\ref{flow}) on $[0,\infty)$. Then
there exists a constant $\tilde{C}$, depending only on initial
data such that$$
\|\sup\varphi_t-\inf\varphi_t\|_{C^0}\leq\tilde{C}.$$\label{t4.1}
\et
\begin{proof}
It suffices to show a uniform lower bound of
$\inf\tilde{\varphi_t}$, where
$\tilde{\varphi_t}=\varphi_t-\sup_M\varphi_t$. Following
\cite{W2}, we prove by contradiction. If such a lower bound does
not exist, then we can choose a sequence of times $t_i\rightarrow
\infty$ such that
\begin{itemize}
    \item  $\inf_M \tilde{\varphi_{t_i}}
    =\inf_{t\in[0,t_i]}\inf_M\tilde{\varphi_t}$
    \item  $\inf_M\tilde{\varphi_{t_i}}\rightarrow -\infty$
\end{itemize}
Set $B=A/(1-\delta)$ where A is the constant in Theorem
\ref{t3.1}, and let $\delta$ be a small positive constant to be
determined later. Let $u=e^{-B\tilde{\varphi_{t_i}}}$. We apply
Lemma 3.3, Lemma 3.4 of \cite{W2}, there is a constant $c'$
independent of u,such that $$ \|u\|_{C^0}\leq C'\|u\|_{\delta}.$$
Since $u=e^{-B\tilde{\varphi_{t_i}}}$ and $\tilde{\varphi_{t_i}}$
satisfies $\sup_M \tilde{\varphi_{t_i}}=0$ and $$
\chi_{_0\kl}+(\tilde{\varphi_{t_i}})_{\kl}=\chi_{\kl}>0,$$ we can
apply Proposition 2.1 of \cite{T} to get a bound on
$\|u\|_{\delta}$ for $\delta$ small enough. This gives the uniform
$C^0$ estimate of $\tilde{\varphi_t}$.
\end{proof}

So far we have got the uniform $C^0$ estimate for oscillation of
$\varphi_t$, in order to get convergence we have to normalize
$\varphi_t$, namely let
$$\hat{\varphi_t}=\varphi_t-\frac{\mathcal{F}_{n-k}(\varphi_t)}{\int_M d\mu_t}.$$

Then $\hat{\varphi_t}$ takes value zero somewhere, by Theorem
\ref{t4.1}, $\|\hat{\varphi_t}\|_{C^0}\leq \tilde{C}$. With this
choice of normalization, we see the partial $C^2$ estimate is
actually uniform. By Theorem \ref{t3.1} $$ \|\partial
\bar{\partial}\hat{\varphi_t}\|_{C^0}=\|\partial
\bar{\partial}\varphi_t\|_{C^0}\leq A
e^{c(\varphi_t-\inf_{M\times[0,t]}\varphi_t)}.$$ For the exponent,
we have
\begin{eqnarray}
\varphi_t-\inf_{M\times[0,t]}\varphi_t&=&\hat{\varphi_t}+\frac{\mathcal{F}_{n-k}(\varphi_t)}{\int_M
d\mu_t}-\inf_{M\times[0,t]}(\hat{\varphi_t}+\frac{\mathcal{F}_{n-k}(\varphi_t)}{\int_M
d\mu_t})\\\notag
&\leq&\hat{\varphi_t}+\frac{\mathcal{F}_{n-k}(\varphi_t)}{\int_M
d\mu_t}-\inf_{M\times[0,t]}\hat{\varphi_t}-\inf_{M\times[0,t]}\frac{\mathcal{F}_{n-k}(\varphi_t)}{\int_M
d\mu_t}\\\notag
&=&\hat{\varphi_t}-\inf_{M\times[0,t]}\hat{\varphi_t}+\frac{\mathcal{F}_{n-k}(\varphi_t)}{\int_M
d\mu_t}-\inf_{M\times[0,t]}\frac{\mathcal{F}_{n-k}(\varphi_t)}{\int_M
d\mu_t}\\\notag
&=&\hat{\varphi_t}-\inf_{M\times[0,t]}\hat{\varphi_t}\leq
2\tilde{C}.
\end{eqnarray}
Last equality follows from Proposition \ref{p4.2} and the fact
$\int_M d \mu_t$ is independent of $t$. Hence, we have a uniform constant $C$ such that
$$||\partial\bar\partial \varphi_{t}||_{C^{0}}<C.$$

Since we get bound for complex hessian of $\varphi$, the underlying real  parabolic equation (\ref{flow}) has
uniform elliptic constants. By \cite{Wang1,Wang2}, one can deduce
$C^{2,\alpha}$ spatial and time estimate on $\varphi$. Then
classical Schauder theory can be applied to prove estimates all the way
to $C^{\infty}$. Consequently the flow exists on $[0,\infty)$. We
will provide more explanations of PDE aspect in Appendix B. \\

To show the convergence without passing to a subsequence, one can
follow the methods in \cite{C},\cite{W2}.

\section{Generalization and Applications}
\setcounter{equation}{0} \setcounter{theorem}{0} In this section,
we apply Theorem \ref{t3.1} to the product manifold $M\times C$,
where $C$ is an algebraic curve, to prove Theorem \ref{theorem2}.
\begin{proof}
First, let us recall the definition following constants: \be
c_k=c_{k,[\omega],[\chi]}=\frac{\int_M \chi_{_0}^k\wedge
\omega^{n-k}}{\int_M \chi_{_0}^n},\ee \be c_{k,\alpha}=c_k+\alpha
c_{k-1},\ \ \ \alpha\geq0,\label{5.1} \ee and cone condition
$\mathcal{C}_{k,\alpha}=\mathcal{C}_{k,\alpha}(\omega)$:
\begin{eqnarray} {\mathcal
C}_{k,\alpha}(\omega)&=&\{[\chi]\in\mathcal{H}^{+}, \ \exists
\chi'\in[\chi], \ {\rm such \ that}\\&&\notag
 c_{k,\alpha} n\chi'^{n-1}> (n-k)\chi'^{n-k-1}\wedge\omega^k+\alpha
 (n-k+1)\chi'^{n-k}\wedge\omega^{k-1}\}.\end{eqnarray}

Let $\omega_{_0}$, $\chi_{_0}\in [\chi]$ be two \kh \ forms on M,
$\omega_{_1}$ be a \kh \  form on $C$. Set $$
\tilde{\chi_{_0}}=\chi_{_0}+ a\omega_{_1},\ \ \ \text{and}\ \ \
\tilde{\omega_{_0}}=\omega_{_0}+\omega_{_1}.$$ Then on $M\times
C$, consider following flow in $\mathcal{P}_{\chi_{_0}}$,
\begin{equation}\label{flow2}
\frac{\p\varphi}{\p
t}=c^{\frac{1}{k}}-(\frac{\sigma_{n+1-k}(\tilde{\chi_{\varphi}})}{\sigma_{n+1}(\tilde{\chi_{\varphi}})})^{\frac{1}{k}},\
\ \ \ \  \varphi|_{t=0}=0,
\end{equation}
where
$\tilde{\chi_{\varphi}}=\tilde{\chi_{_0}}+\frac{\sqrt{-1}}{2}\p\bar{\p}\varphi$,
and $$c=\frac{\int_{M\times C}
\sigma_{n+1-k}(\tilde{\chi_{\varphi}})}{\int_{M\times C}
\sigma_{n+1}(\tilde{\chi_{\varphi}})}=\frac{a\int_M\sigma_{n-k}(\chi_{_0})+\int_M \sigma_{n-k+1}(\chi_{_0})}{a\int_M \sigma_n(\chi_{_0})}={n\choose k}c_k+\frac{1}{a}{n\choose k-1}c_{k-1}.$$\\

In local coordinates, one shall view the matrix
$((\tilde{\chi_{\varphi}})_{i\bar{j}})$ as $\begin{pmatrix}
  (\chi_{\varphi})_{i\bar{j}}& 0 \\
  0 & a \omega_1
\end{pmatrix}$.
In view of Theorem \ref{t3.1}, we want to bound the largest
eigenvalue of $\begin{pmatrix}
  (\chi_{\varphi})_{i\bar{j}}& 0 \\
  0 & a \omega_1
\end{pmatrix}$. Without loss of generality, we can assume the
corresponding direction is $\frac{\p}{\p z_1}\in T^{(1,0)}M$.
Otherwise the estimate follows trivially, since $\omega_1$ is
fixed under the flow. Compare the proof of Theorem \ref{t3.1}, we
impose condition:
\begin{equation}
\sigma_k(\tilde{\chi_{_0}}^{-1}|i)<c, \ \ \ \ \forall\
i=1,2,\cdots, n.\label{5.5}
\end{equation}
which translates to a condition on $M$ as:
\begin{equation}
\frac{1}{a}\sigma_{k-1}(\chi_{_0}^{-1}|i)+\sigma_k(\chi_{_0}^{-1}|i)<c,
 \ \ \ \ \forall\
i=1,2,\cdots, n.
\end{equation}
Then the whole argument applies. Moreover, $C^0$ estimate can be
applied directly. Therefore we get a stationary metric $\chi$ on
$M$ solving:
\begin{equation}
ac\chi^n=a{n\choose k}\chi^{n-k}\wedge \omega^k+{n\choose
k-1}\chi^{n-k+1}\wedge\omega^{k-1}.\label{5.7}
\end{equation}
After setting $\alpha=\frac{{n \choose k-1}}{a {n \choose k}}$,
one can readily check that $[\chi]\in \mathcal{C}_{k,\alpha}$
imply (\ref{5.5}), and (\ref{5.7}) becomes \be
c_{k,\alpha}\chi^n=\chi^{n-k}\wedge\omega^k+ \alpha
\chi^{n-k+1}\wedge \omega^{k-1}.\notag \ee
\end{proof}

Based on the known result, we can refine Chen's problem into the following:

\begin{conjecture} \label{conj}
For fixed $q$, $0\leq q \leq n$, and for any given
$\alpha=(\alpha_{0},\cdots,\alpha_{p})\in
{\mathbb{R}}^{p+1}$,$p\leq n-q$ $\alpha_{i}> 0$, $0\leq i\leq p$,
define
\begin{eqnarray*} c_{\alpha}&=&c_{k,\alpha,[\omega],[\chi]}=\sum_{i=0}^{p} c_{i+q}\alpha_{i} ,\\
\mathcal {\tilde F}_{\alpha,n}(\chi_{_0},\chi)&=& \sum_{i=0}^{p} \alpha_{i} \mathcal {\tilde F}_{i+q,n}(\chi_{_0},\chi),\\
 {\mathcal C}_{\alpha}(\omega)&=&\{[\chi]\in\mathcal{H}^{+}, \ \exists \chi'\in[\chi], \ {\rm such \ that}\
 c_{\alpha} n\chi'^{n-1}> \sum_{i=0}^{p} \alpha_{i}(n-i-q)\chi'^{n-i-q-1}\wedge\omega^{i+q}\}.
\end{eqnarray*}Then
\be
c_{\alpha}\chi_{\varphi}^n=\sum_{i=0}^{p}\alpha_i\chi_{\varphi}^{i+q}\wedge
\omega^{n-i-q},\ee
 has a unique smooth solution if and only if $[\chi]\in {\mathcal C}_{\alpha}(\omega)$; in this case, $\mathcal {\tilde F}_{\alpha,n}(\chi_{_0},\chi)$ obtains minimal at the given solution.
\end{conjecture}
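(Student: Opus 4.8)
The plan is to reduce the mixed equation to a single-degree equation of the type covered by Theorem~\ref{theorem1} by passing to a product manifold, exactly as in the proof of Theorem~\ref{theorem2}. I would fix $q$ with $1\le q\le n-1$ and a positive tuple $\alpha=(\alpha_0,\dots,\alpha_p)$, take $N=C_1\times\cdots\times C_p$ a product of smooth algebraic curves with \kh\ forms $\omega_1,\dots,\omega_p$, set $k:=n-q$ (in the notation of (\ref{flow})), and on the $(n+p)$-dimensional manifold $M\times N$ put
\[
\tilde\chi_0=\chi_0+\sum_{r=1}^p a_r\omega_r,\qquad \tilde\omega_0=\omega_0+\sum_{r=1}^p\omega_r ,
\]
with positive constants $a_r$ to be chosen. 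In a product normal frame $\tilde\chi_\varphi$ is block-diagonal, $\tilde\chi_\varphi=\mathrm{diag}(\chi_\varphi,D)$ with $D=\mathrm{diag}(a_1,\dots,a_p)$ frozen along the flow, so the single-degree flow
\[
\frac{\p\varphi}{\p t}=c^{1/k}-\Big(\frac{\sigma_{(n+p)-k}(\tilde\chi_\varphi)}{\sigma_{n+p}(\tilde\chi_\varphi)}\Big)^{1/k}
\]
on $M\times N$ has stationary equation $\sigma_{q+p}(\tilde\chi)=c\,\sigma_{n+p}(\tilde\chi)$, which by the block expansion $\sigma_{q+p}(\tilde\chi)=\sum_{i=0}^{p}\sigma_{q+i}(\chi)\,\sigma_{p-i}(D)$ becomes exactly $c_\alpha\,\chi^n=\sum_i\alpha_i\chi^{q+i}\wedge\omega^{n-q-i}$ once the ratios $\sigma_{p-i}(D)$ are matched to the $\alpha_i$ (up to the usual binomial factors), with $c_\alpha$ emerging as the topological constant of the product. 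By the uniqueness statement below, the $M\times N$ solution is pulled back from $M$, producing the desired $\chi$ on $M$.

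The cone conditions match up just as cleanly. Applying Proposition~\ref{new2.2} on $M\times N$, the hypothesis $[\tilde\chi_0]\in\mathcal C_k(\tilde\omega)$ reads $\sigma_k((\tilde\chi'|I)^{-1})<c$ for every index $I$ of the $(n+p)$-matrix; for $I$ an $N$-index this is automatic (the $N$-block is fixed and only shrinks the left side), while for $I$ an $M$-index the block expansion turns it into $\sum_{i=0}^{p}\gamma_i\,\sigma_{(n-q)-i}\big((\chi'|_I)^{-1}\big)<c_\alpha'$, which is precisely the local form of $[\chi]\in\mathcal C_\alpha(\omega)$ obtained by pairing the defining $(n-1,n-1)$-form of $\mathcal C_\alpha$ against $\prod_{m\ne I}dz^m\wedge d\bar z^m$ (the same computation that proved Proposition~\ref{new2.2}). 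With this in hand the a priori estimates run verbatim: in the partial $C^2$ estimate of Section~3, if the bad direction $\xi_0$ lies in $TN$ there is nothing to prove since the $\omega_r$ are frozen, and if $\xi_0\in TM$ the block structure reduces the strong-concavity input (Theorem~\ref{2.2}, Lemma~\ref{lemma2.5}, Corollary~\ref{cor2.6}, and the $B\le 0$ computation) to the corresponding statements on $M$ together with the positive-definite frozen block $D$; the $C^0$ estimate and Moser iteration of Section~4 are insensitive to the splitting, so the flow exists for all time and converges. Uniqueness and the minimizing property then follow, as in Theorem~\ref{p4.1} and Proposition~\ref{p4.3}, from the second variation
\[
\frac{d^2}{dt^2}\tilde{\mathcal F}_{\alpha,n}(\phi_t)=\int_M\ddot\phi_t\,(\text{E--L expression})+\int_M \p\dot\phi_t\wedge\pb\dot\phi_t\Big(c_\alpha n\chi^{n-1}-\sum_i\alpha_i(n-q-i)\chi^{n-q-i-1}\wedge\omega^{q+i}\Big),
\]
which is $\ge 0$ along the affine path between two solutions because each $\sigma_{(n-q)-i}(\chi^{-1})$ is convex in the \kh\ potential (as used in the proof of Theorem~\ref{p4.1}), so the local cone inequality persists along that path; necessity of $[\chi]\in\mathcal C_\alpha(\omega)$ is the pointwise version of the same positivity, read off the equation exactly as in Section~2.

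The main obstacle is the coefficient-matching step. A product of curves supplies only the $p$-parameter family of frozen blocks $D=\mathrm{diag}(a_1,\dots,a_p)$, so the realizable tuples $(\sigma_0(D),\dots,\sigma_p(D))$ fill out, up to scaling, only the Newton-admissible cone --- a proper subcone of $\{\alpha_i>0\}$. Hence this construction settles the conjecture only for $\alpha$ in that subcone (in particular for $p=1$, recovering Theorem~\ref{theorem2}, and near the ``diagonal'' tuples obtained from equal $a_r$). Reaching a fully arbitrary positive tuple would require a more flexible auxiliary geometry --- a higher-dimensional factor $N$ carrying a non-product polarization, or an iterated/limiting construction --- and the delicate point is whether the partial $C^2$ and $C^0$ estimates stay uniform once the auxiliary block is no longer frozen or is allowed to degenerate. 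A secondary, more technical issue, present already for admissible $\alpha$, is verifying that the inequality $\sum_i\gamma_i\sigma_{(n-q)-i}(\chi_{\phi_t}^{-1})<c_\alpha'$ is genuinely preserved along the affine path joining two solutions; this rests on the convexity of each $\sigma_j(\chi^{-1})$ (not merely of $\sigma_j^{1/j}(\chi^{-1})$) in the \kh\ potential, which follows from the concavity facts of Section~2 but must be assembled for the full linear combination.
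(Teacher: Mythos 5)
Your proposal takes essentially the same route as the paper: the authors likewise reduce to the single-degree flow on $M\times C_1\times\cdots\times C_p$ with a frozen diagonal block $D=\mathrm{diag}(a_1,\dots,a_p)$, and their Theorem~\ref{t5.1} establishes the conjecture only for coefficient tuples of the form $\beta_i=\sigma_{k-i}(b){n\choose i}$ with $b\in\Gamma_p$, which is exactly the Newton-admissible subcone you identify. Your honest assessment of the coefficient-matching obstruction coincides with the paper's own remark that these combinatorial constraints are an artifact of the product-of-curves construction and that the general conjecture remains open.
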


Use the same method we can verify Conjecture \ref{conj} under some
additional conditions on $\alpha_i$'s. We consider $M\times
C_1\times C_2 \cdots \times C_p$, where $C_i$ are all algebraic
curves. Set $\omega_i$ be \kh \ forms on $C_i$. For $a_i>0$ set
$$
\tilde{\chi_{_0}}=\chi_{_0}+\sum_{i=1}^p a_i\omega_i,\ \
\tilde{\omega}=\sum_{i=0}^n \omega_i.$$ Follow the method above
one can solve \be c \sigma_{n+p}(\tilde{\chi})=
\sigma_{n+p-k}(\tilde{\chi}), \text{on}\  {\tilde M}:=M\times C_1\times C_2
\cdots \times C_p,\label {5.9}\ee where $c$ is the constant
satisfying
$$c=\frac{\int_{\tilde M} \sigma_{n+p-k}(\tilde{\chi})}{\int_{\tilde{M}} \sigma_{n+p}(\tilde{\chi})}.$$
Similarly, one reduces (\ref{5.9}) to an equation on $M$.
According to the relationship of $k$, $n$, and $p$, there will be
four cases which we state as a theorem.

\bt Let $M$, $\omega$, and $[\chi]$ be as above. $\Gamma_p$ is the
positive cone in $\mathbb{R}^{p}$. Conjecture~\ref{conj} holds for
the following special equations:

\begin{enumerate}
    \item For $p\geq k$ and $n>k$,
    $$c \chi^n= \beta_0 \chi^n + \beta_{1}\chi^{n-1}\wedge\omega
    +\cdots + \beta_k \chi^{n-k}\wedge\omega^k, c=\sum_{i=0}^{k}\beta_i c_i,$$
    for which we require the existence of a $b=(b_1,b_2,\cdots,b_p)\in \Gamma_p $ such that $\beta_i=\sigma_{k-i}(b){ n\choose i}, i=0,1,\cdots
    k$;
    \item For $p<k<n$,
    $$c \chi^n= \beta_0 \chi^{n+p-k}\wedge \omega^{k-p} +
    \beta_{1}\chi^{n+p-k-1}\wedge\omega^{k-p+1}
    +\cdots + \beta_p \chi^{n-k}\wedge\omega^k,c=\sum_{i=0}^{p}\beta_i c_{k-p+i},$$
    for which we require the existence of  a $b=(b_1,b_2,\cdots,b_p)\in \Gamma_p$ such that $\beta_i=\sigma_{p-i}(b){ n\choose k-p+i}, i=0,1,\cdots p$;
    \item For $p\geq k\geq n$,
    $$c \chi^n=\beta_0\chi^n+ \beta_{1}\chi^{n-1}\wedge\omega
    +\cdots + \beta_n \omega^{n},c=\sum_{i=0}^{n}\beta_i c_{i},$$
    for which we require the existence of a $b=(b_1,b_2,\cdots,b_p)\in \Gamma_p $ such that $\beta_i=\sigma_{k-i}(b){n\choose i},
    i=0,1,\cdots,n$;
    \item For $k>p$ and $k\geq n$,
    $$c \chi^n=\beta_{0}\chi^{n+p-k}\wedge\omega^{k-p}+
    \beta_{1}\chi^{n+p-k-1}\wedge\omega^{k-p+1}
    +\cdots + \beta_{n+p-k} \omega^{n},c=\sum_{i=0}^{n+p-k}\beta_i c_{k-p+i},$$
    where we require there exist some $b=(b_1,b_2,\cdots,b_p)\in \Gamma_p $ such that $\beta_i=\sigma_{p-i}(b){n\choose k-p+i},
    i=0,1,\cdots,n+p-k$.
\end{enumerate}\label{t5.1}
\et
\begin{remark}
It is due to our specific method that $\beta_i$'s have certain combinatorial constraints. We expect to remove these technical constraints in future works.
\end{remark}

We finish the discussion with a geometric application.

Consider $[\chi]=[\omega]+\epsilon [a]$, where $[a]\in H^{1,1}(M)$
and $\epsilon\in \mathbb{R}$ . Since $\omega$ is in the cone
$\mathcal{C}_{k}$, and the cone is obvious open, then for
$|\epsilon|$ small, $[\chi]\in\mathcal{C}_{k}$ for any
$k\in\{1,\cdots, n\}$. Thus, by Theorem~\ref{theorem1}, we have
$\chi\in[\chi]$ such that

$${{\chi^{n-k}\wedge \omega^{k}}\over{\chi^{n}}}=c_{k}.$$
On the other hand, it is easy to check that, on the manifold $M$,
we have the following point-wise inequalities: \be
{{\chi^{n-1}\wedge \omega}\over{\chi^{n}}}\geq{{\chi^{n-2} \wedge
\omega^{2}}\over{\chi^{n-1}\wedge \omega}}\geq
\cdots\geq{{\chi^{n-k}\wedge \omega^{k}}\over{\chi^{n-k+1}\wedge
\omega^{k-1}}}, \ee where any equality holds iff $\chi=
\lambda\omega$ for some constant $\lambda$. Thus, \be
{{\chi^{n-1}\wedge\omega}\over{\chi^{n}}}\geq
[{{\chi^{n-1}\wedge\omega}\over{\chi^{n}}}\cdot{{\chi^{n-2}
\wedge\omega^{2}}\over{\chi^{n-1}\wedge\omega^{1}}}\cdot
\cdots\cdot{{\chi^{n-k}\wedge\omega^{k}}\over{\chi^{n-k+1}\wedge\omega^{k-1}}}]^{1\over{k}}
=( c_{k})^{1\over{k}}. \ee This leads to \be
{\int_{M}{\chi^{n-1}\wedge\omega}\over{\int_{M}\chi^{n}}}\geq (
c_{k})^{1\over{k}}=[{\int_{M}{\chi^{n-k}\wedge
\omega^k}\over{\int_{M}\chi^{n}}}]^{1\over{k}}.\label{new2} \ee
Notice that (\ref{new2}) is independent of the choice of
$\chi\in[\chi]$. Notice $[\chi]=[\omega]+\epsilon [a]$. Take
$k=2$, and expand both sides of (\ref{new2}) as a series of
$\epsilon$, then let $\epsilon \rightarrow 0$, we get the
following inequality:
\begin{equation}(\int_{M}\omega^{n-2}\wedge a^{2})(\int_{M}\omega^{n})\leq
\frac{n-1}{n(n-2)}(\int_{M}\omega^{n-1}\wedge a)^{2},
\end{equation} where the identity holds iff $[a]=\lambda'[\omega]$ for
some constant $\lambda'$. This is exactly the Riemann-Hodge
bi-linear relation for $(1,1)$-classes (see, e.g.,~\cite{GH}).

\bigskip

  \renewcommand{\theequation}{A.\arabic{equation}}
  \renewcommand{\thetheorem}{A.\arabic{theorem}}

  \setcounter{equation}{0}  
  \setcounter{theorem}{0}
  \section*{APPENDIX A}  
  \bigskip

In this appendix, we first present another proof of Remark
~\ref{remk2.0}. For the convenience of readers, we restate it as
the following: \bp\label{a1} Let $g=\sigma_k^{\frac{1}{k}}(\chi)$,
and $\chi\in \Gamma_n$. Let $g_i:=\frac{\p g}{\p \chi_i},
g_{ij}:=\frac{\p^2 g}{\p \chi_i \p \chi_j}$. Then the matrix
$g_{ij}+\frac{g_i}{\chi_j}\delta_{ij}$ is nonnegative. \ep \proof

Step 1.\\
Consider $h:=\sigma_k(\chi^{\frac{1}{k}})$. Use the same notation
as above, we claim
$$h_{ij}+\frac{h_i}{\chi_j}\delta_{ij}\geq 0.$$ Direct
computation shows that:
\begin{eqnarray}\label{app1}
h_i&=&\frac{1}{k}\sigma_{k-1}(\chi^{\frac{1}{k}}|i)\chi_i^{\frac{1}{k}-1},\\\label{app2}
h_{ij}&=&\frac{1}{k^2}\sigma_{k-2}(\chi^{\frac{1}{k}}|i,j)\chi_i^{\frac{1}{k}-1}\chi_j^{\frac{1}{k}-1}+\frac{1}{k}(\frac{1}{k}-1)\sigma_{k-1}(\chi^{\frac{1}{k}}|i)\chi_j^{\frac{1}{k}-2}\delta_{ij}.
\end{eqnarray}
Introduce the following notation: for $I=(i_1,i_2,\cdots i_l)$ an
arbitrary index set of length $l$, let $\sigma_{k;I}:=\sum
_{|I|=k}\chi_{I}\sigma_{k-l}(\chi|I)$,where
$\chi_{I}=\chi_{_{i_1}}\chi_{_{i_2}}\cdots\chi_{_{i_l}}$.
Basically, it is the collection of terms in which indices $i\in I$
appear. In this notation, we can rewrite (\ref{app1}),
(\ref{app2}) as:
\begin{eqnarray}
h_i&=&\frac{\sigma_{k;i}}{k\chi_i},\\
h_{ij}&=&\frac{\sigma_{k;i,j}}{k^2\chi_i\chi_i}, \ \ \ \text{for} \
i\neq
j,\\
h_{ii}&=&\frac{1}{k}(\frac{1}{k}-1)\frac{\sigma_{k;i}}{\chi_i^2}.
\end{eqnarray}
So $h_{ij}+\frac{h_i}{\chi_j}\delta_{ij}$ equals:
\begin{equation}\begin{bmatrix}
  \frac{\sigma_{k;1}}{k^2\chi_1^2} & \frac{\sigma_{k;1,2}}{k^2\chi_1\chi_2}& \cdot &\cdot& \frac{\sigma_{k;1,n}}{k^2\chi_1\chi_n} \\
 \frac{\sigma_{k;1,2}}{k^2\chi_1\chi_2} & \frac{\sigma_{k;2}}{k^2\chi_2^2}&  &  &  \\
   \cdot&  & \cdot & &  \\
   \cdot&  &&  \cdot& \\
  \frac{\sigma_{k;1,n}}{k^2\chi_1\chi_n} & \cdot &  & & \frac{\sigma_{k;n}}{k^2\chi_n^2} .\\
\end{bmatrix}
\end{equation}
Then it is equivalent to show that $$A:=\begin{bmatrix}
 \sigma_{k;1} & \sigma_{k;1,2} &  \cdot&  & \sigma_{k;1,n} \\
 \sigma_{k;1,2}& \sigma_{k;2} &  &  &  \cdot\\
   \cdot &  & \cdot &  &  \\
  &  & &  \cdot & \\
 \sigma_{k;1,n}&  &  &  & \sigma_{k;n} \\
\end{bmatrix}$$ is nonnegative. For an index set $I$, Let $E_{I}$ be the matrix
having entry $1$ in $i$-th row and $j$-th column of an $n\times n$ matrix, where $i,j\in I$, and entry $0$
elsewhere. It is clear that $E_{I}$ is nonnegative. Moreover,
we have the following nice decomposition:
\begin{equation}
A=\sum_{|I|=k}\chi_{I}E_{I}\geq 0.
\end{equation}
 Thus
$$h_{ij}+\frac{h_i}{\chi_j}\delta_{ij}\geq 0.$$\smallskip
Step 2.\\
We claim
$$h_{ij}+\frac{h_i}{\chi_j}\delta_{ij}-\frac{h_ih_j}{h}\geq 0.$$
\smallskip
We use a nice trick due to Andrews~\cite{Andrews}.
Since $h$ is homogenous of degree $1$,  $h_i\chi_i=h$.
Differentiate both sides, one gets $h_{ij}\chi_i=0$. Consequently,
\begin{equation}
(h_{ij}+\frac{h_i}{\chi_j}\delta_{ij}-\frac{h_ih_j}{h})\chi_i\chi_j=0,
\end{equation}
i.e. $\chi$ is a null vector. In order to show
$h_{ij}+\frac{h_i}{\chi_j}\delta_{ij}-\frac{h_ih_j}{h}\geq 0$, one
then only need to look at a subspace transversal to the null
vector $\chi=(\chi_1,\cdots, \chi_n).$ Naturally, we choose the
subspace defined by $\{\xi|h_i\xi_i=0\}$. Then
$(h_{ij}+\frac{h_i}{\chi_j}\delta_{ij}-\frac{h_ih_j}{h})\xi_i\xi_j=(h_{ij}+\frac{h_i}{\chi_j}\delta_{ij})\xi_i\xi_j$,
which is nonnegative from step 1. \\
Step 3.\\
$g(\chi_1, \cdots \chi_n)=h^{\frac{1}{k}}(\chi_1^k, \cdots,
\chi_n^k)$, a simple computation shows that:
\begin{equation}\label{app3}
g_{ij}+\frac{g_i}{\chi_j}\delta_{ij}=kh^{\frac{1}{k}-1}(\lambda)\lambda_i^{1-\frac{1}{k}}\lambda_j^{1-\frac{1}{k}}[h_{ij}+\frac{h_i}{\lambda_j}\delta_{ij}-\frac{k-1}{k}\frac{h_i
h_i}{h}],
\end{equation}
where $\chi_i^k=\lambda_i$. Thus,
\begin{equation}
h_{ij}+\frac{h_i}{\lambda_j}\delta_{ij}-\frac{k-1}{k}\frac{h_i
h_i}{h}\geq h_{ij}+\frac{h_i}{\lambda_j}\delta_{ij}-\frac{h_i
h_i}{h}\geq 0 , \end{equation} the last inequality is due to step 2. The proof is thus
completed. \qed
\begin{remark}It is clear from the above proof that the conclusion of Proposition~\label{a1} holds for $g=\sigma_k^{\epsilon}(\chi)$, with $\epsilon>0$.
\end{remark}

\bigskip

  \renewcommand{\theequation}{B.\arabic{equation}}
  \renewcommand{\thetheorem}{B.\arabic{theorem}}

  \setcounter{equation}{0}  
  \setcounter{theorem}{0}
  \section*{APPENDIX B}  
  \bigskip

In this appendix, we summarize the classical parabolic
Krylov-Evans theory that are applied in this paper. In particular, we deduce time
$C^{\frac{\alpha}{2}}$ estimates for $\partial \bar{\partial}
\varphi$ for (\ref{flow}). These estimates are local in nature.
This proof is essentially  due to Lihe Wang~\cite{Wang2}.

In the parabolic case, it is also convenient to
introduce the following regularity notation. We say $\varphi=\varphi(x,t) \in
C^{2, \alpha}$ in the parabolic sense if and only if $\varphi \in
C^{2,\alpha}$ in spatial variable $x\in \mathbb{R}^{n}$, and $\varphi\in C^{1,
\frac{\alpha}{2}}$ in time variable $t\in \mathbb{R}$ in the usual sense. Different
regularity is due to different scaling of spatial and time
variables. We will also write $C^{2+\alpha,
1+\frac{\alpha}{2}}_{x,t}$ to indicate regularity respectively. For a thorough exposition, we refer readers to \cite{Wang1} and \cite{Wang2}.\\

The fundamental tool to attack nonlinear parabolic equation is
following:
\begin{theorem}[Krylov-Safanov]
Let $\varphi$ be a solution of \begin{equation}
\varphi_t=a_{ij}(x,t)\varphi_{ij},
\end{equation} in $\mathbf{Q}_1$, and $a_{ij}$ is uniform elliptic, then $\varphi$
is in $C^{\alpha}_{loc}(\mathbf{Q}_1)$, i.e., $\varphi$ is
$C^{\alpha}$ in spatial and $\varphi$ is $C^{\frac{\alpha}{2}}$ in
time.
\end{theorem}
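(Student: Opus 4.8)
The plan is to follow the classical Krylov--Safonov strategy: derive the H\"older estimate from an oscillation decay, derive the oscillation decay from a Harnack inequality for nonnegative solutions, and derive the Harnack inequality from a measure-theoretic growth lemma. Throughout I would work with parabolic cylinders $\mathbf{Q}_r=B_r\times(-r^2,0]$ and the natural parabolic distance $d((x,t),(y,s))=|x-y|+|t-s|^{1/2}$; the asserted conclusion ``$C^{\alpha}$ in $x$ and $C^{\alpha/2}$ in $t$'' is precisely $C^{\alpha}$ regularity in this distance. After normalizing so that $0\le\varphi\le 1$ on $\mathbf{Q}_1$, the goal reduces to proving
\[
\mathrm{osc}_{\mathbf{Q}_{\rho r}}\varphi\le(1-\theta)\,\mathrm{osc}_{\mathbf{Q}_{r}}\varphi
\]
for fixed $\rho,\theta\in(0,1)$ depending only on $n$ and the ellipticity constants of $a_{ij}$; iterating this estimate along a dyadic chain of nested cylinders then produces the stated local estimate, with the exponents $\alpha$ in space and $\alpha/2$ in time emerging from the single parabolic-distance bound.

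The heart of the matter is a parabolic weak Harnack inequality: if $v\ge 0$ is a supersolution on $\mathbf{Q}_1$ and the superlevel set $\{v\ge 1\}$ occupies at least a fixed fraction $\mu$ of the ``past'' subcylinder, then $v\ge\delta$ on a smaller forward subcylinder, with $\delta=\delta(n,\mu,\text{ellipticity})>0$. To establish this I would first prove the parabolic Aleksandrov--Bakelman--Pucci estimate, which converts a bound on $\varphi_t-a_{ij}\varphi_{ij}$ into an estimate on the measure of a contact set and supplies the basic ``a small superlevel set forces a pointwise lower bound on a slightly smaller set'' building block. Then the Krylov--Safonov covering lemma --- the parabolic ``stacked cylinders'' / ink-spots lemma --- is used to amplify a small-measure lower bound into a definite-fraction lower bound, and then into an everywhere lower bound on a fixed subcylinder. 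Iterating the resulting weak Harnack along a chain of cylinders yields the full Harnack inequality $\sup_{\mathbf{Q}^{-}}v\le C\inf_{\mathbf{Q}^{+}}v$ for nonnegative solutions.

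With the Harnack inequality in hand, the oscillation decay follows by the standard device: writing $M=\sup_{\mathbf{Q}_r}\varphi$ and $m=\inf_{\mathbf{Q}_r}\varphi$, at least one of the two nonnegative solutions $M-\varphi$ and $\varphi-m$ has its superlevel set $\{\,\cdot\,\ge(M-m)/2\}$ occupying at least half of the past subcylinder of $\mathbf{Q}_r$; applying the weak Harnack to that one yields a lower bound of order $M-m$ on $\mathbf{Q}_{\rho r}$, hence a strictly smaller oscillation there, which closes the induction.

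The main obstacle is the Krylov--Safonov measure/covering machinery: the parabolic ABP estimate must be set up with the correct parabolic scaling, and the covering lemma has to respect the forward-in-time, anisotropic geometry of parabolic cylinders, since one genuinely must ``wait'' a definite amount of time before a lower bound propagates (this is why cylinders, not balls, are the right objects and why the Harnack inequality is one-sided in time). Once these two ingredients are in place the remaining steps are routine bookkeeping; here I would only indicate the structure above and defer the detailed constants and the covering argument to \cite{Wang1,Wang2}.
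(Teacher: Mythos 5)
The paper does not actually prove this theorem: it is quoted in Appendix~B as the classical Krylov--Safonov H\"older estimate and deferred entirely to the references (Krylov, Lieberman, and Wang's papers), so there is no in-paper argument to compare against. Your outline is the standard and correct route to it: parabolic ABP estimate $\Rightarrow$ measure-theoretic growth lemma, amplified by the stacked-cylinders/ink-spots covering lemma into a weak Harnack inequality for nonnegative supersolutions, then the usual oscillation-decay argument applied to $M-\varphi$ and $\varphi-m$, iterated over dyadic parabolic cylinders to extract the exponent $\alpha$ (and hence $C^{\alpha}$ in $x$, $C^{\alpha/2}$ in $t$, i.e.\ $C^{\alpha}$ in the parabolic metric). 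You also correctly flag the two genuinely hard points --- the parabolic scaling in the ABP estimate and the forward-in-time anisotropy of the covering argument, which is the source of the time lag in the Harnack inequality. Be aware, though, that what you have written is a roadmap rather than a proof: the ABP estimate and the covering lemma, which you defer to \cite{Wang1,Wang2}, are precisely where all the work lies, so in effect your proposal cites the result in the same way the paper does, just with more of the internal structure made explicit. One minor economy worth noting: for the stated $C^{\alpha}_{loc}$ conclusion the full two-sided Harnack inequality is not needed --- the weak Harnack (growth lemma) for supersolutions already yields the oscillation decay.
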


The parabolic equation we have is:
\begin{equation} \label{ape}
\frac{\partial \varphi}{\partial t}= c+ F(\partial\bar{\partial}
\varphi).
\end{equation}
By Theorem \ref{t3.1}, $F$ is a uniform elliptic, concave
operator. Taking derivative with respect to $t$ both sides of
(\ref{ape}), one has \be
\varphi_{tt}=F^{i\bar{j}}(\varphi_t)_{i\bar{j}}. \ee By
Theorem~B1, $\varphi_t$ is $C^{\alpha}_{x}$. Thus
(\ref{ape}) can be viewed as an elliptic
equation. Then by the elliptic Krylov-Evans theory, one has spatial $C^{\alpha}$
estimate on $D^{2}_{x}\varphi$. To have
$C^{\alpha}$ estimate for $D^{2}_{x}\varphi$, it is sufficient to show time
$C^{\frac{\alpha}{2}}$ estimate. Since the problem is local in
nature, we just need to prove time $C^{\frac{\alpha}{2}}$ estimate
at $(0,0)$.

Since $\varphi$ is $C^{2,\alpha}$ in spatial, there exist two quadratic polynomials $P_t(x)$ and
$P_0(x)$ such that
\begin{equation}\label{ap1}
|\varphi(x,t)-P_t|\leq C|x|^{2+\alpha},\ \  |x|\leq \sqrt t,
\end{equation}
\begin{equation}\label{ap2}
|\varphi(x,0)-P_0|\leq C|x|^{2+\alpha}, \ \ |x|\leq \sqrt t.
\end{equation} Also, since  $\varphi_t\in C^{\alpha}$
\begin{equation}\label{ap3}
|\varphi(x,t)-\varphi(x,0)-t \varphi_t(x,0)|\leq C
t^{1+\frac{\alpha}{2}}, \ \ |x|\leq \sqrt t.
\end{equation}
\begin{equation} \label{ap5}
|\varphi_t(x,0)-\varphi_t(0,0)|\leq C |x|^{\alpha}.
\end{equation}
By (\ref{ap1}),(\ref{ap2}) and (\ref{ap3}) together, we have
\begin{equation}\label{ap4}
|P_t(x)-P_0(x)-t\varphi_t(x,0)|\leq C t^{1+\frac{\alpha}{2}}.
\end{equation}

(\ref{ap4}) and (\ref{ap5}) imply that
\begin{equation}
|P_t(x)-P_0(x)|\leq C t^{1+\frac{\alpha}{2}},  \ \ |x|\leq \sqrt t.
\end{equation}
For a quadratic polynomial, one has
\begin{equation}
||D_{x}^2 P||_{B_{r}}\leq C \frac{||P||_{L^{\infty}(B_{r})}}{r^2}.
\end{equation}
Therefore, \begin{equation} ||D_{x}^2 P_t-D_{x}^2 P_0||_{B_{\sqrt t}}\leq C
\frac{ ||P_t-P_0||_{L^{\infty}(B_{\sqrt{t}})}}{t}\leq C t^{\frac{\alpha}{2}}.
\end{equation} which implies that
$$||D^{2}_{x}\varphi(0,t)-D^{2}_{x}\varphi(0,0)||\leq C t^{\frac{\alpha}{2}}.$$

\end{document}